%Schriftgroesse in degrueyer...cls in 11pt zurueckaendern!!!!!
\documentclass[11pt]{amsart}
\usepackage{amsmath}
\usepackage{amssymb,amscd,latexsym,paralist}
\usepackage{mathbbol}
\usepackage{stmaryrd}
\usepackage{accents}
\usepackage{bm}
\usepackage[all]{xy}

\newcommand{\A}{{\mathbb{A}}}

\newcommand{\G}{\mathbb{G}}
\newcommand{\N}{{\mathbb{N}}}

\newcommand{\Q}{{\mathbb{Q}}}

\newcommand{\uS}{\underline{S}}

\newcommand{\Z}{{\mathbb{Z}}}

\newcommand{\bfa}{\underline{a}}
\newcommand{\bfb}{\underline{b}}
\newcommand{\bfc}{\underline{c}}

\newcommand{\const}{\mathrm{const}}

\newcommand{\id}{\mathrm{id}}
\newcommand{\ind}{\mathrm{ind}}

\renewcommand{\mod}{\;\mathrm{mod}\;}

\newcommand{\pr}{\mathrm{pr}}
\newcommand{\proj}{\mathrm{proj}}

\newcommand{\res}{\mathrm{res}}

\newcommand{\spec}{\mathrm{spec}\,}
\newcommand{\triv}{\mathrm{triv}}
\newcommand{\Aut}{\mathrm{Aut}}
\newcommand{\Hom}{\mathrm{Hom}}

\newcommand{\Iso}{\mathrm{Iso}}

\newcommand{\oW}{\overline{W}}
\newcommand{\ubA}{\underaccent{\tilde}{A}}
\newcommand{\ubB}{\underaccent{\tilde}{B}}
\newcommand{\ubGh}{\underaccent{\tilde}{\Gh}}
\newcommand{\ubM}{\underaccent{\tilde}{M}}
\newcommand{\ubW}{\underaccent{\tilde}{W}}

\newcommand{\tA}{\tilde{A}}

\newcommand{\Ah}{{\mathcal A}}

\newcommand{\Ch}{{\mathcal C}}

\newcommand{\Fh}{{\mathcal F}}
\newcommand{\Gh}{{\mathcal G}}

\newcommand{\Oh}{{\mathcal O}}

\newcommand{\Rh}{{\mathcal R}}

\newcommand{\Vh}{{\mathcal V}}

\newcommand{\propdiv}{\underset{\neq}{|}}
\newcommand{\silo}{\xrightarrow{\sim}}

\newcommand{\hullet}{\raisebox{0.05cm}{$\,\scriptscriptstyle \bullet\,$}}
\newcommand{\verk}{\mbox{\scriptsize $\,\circ\,$}}
\newcommand{\peq}{\preccurlyeq}

\newtheorem{theorem}{Theorem}
\newtheorem{lemma}[theorem]{Lemma}
\newtheorem{prop}[theorem]{Proposition}
\newtheorem{cor}[theorem]{Corollary}
\newenvironment{example}{\noindent {\bf Example}}{}

\newtheorem{exmp}[theorem]{Example}

\newenvironment{claimnn}{\noindent {\bf Claim}}{}

\newenvironment{defn}{\noindent {\bf Definition}\it}{}
\newenvironment{rem}{\noindent {\bf Remark}}{}

\parskip2ex plus0.5ex minus0.5ex
\parindent0.em
\topmargin0cm
\oddsidemargin0.5cm
\textwidth78ex
%\newcounter{subsection}%[section]
\begin{document}
\title{The universal deformation of the Witt ring scheme}
\author{Christopher Deninger \and Young-Tak Oh}
\date{\today}
\maketitle
\section{Introduction}

Witt vectors play an important role in several branches of mathematics. Combinatorial considerations led to the study of certain $q$-deformations over $\spec \Z [q]$ of the big Witt vector scheme $W$ over $\spec \Z$, c.f. \cite{O2}. In the present note we consider another and simpler $q$-deformation of $W$ to a non-unital ring scheme $W^{(q)}$. The main result asserts that $W^{(q)}$ enhanced by a Frobenius lift, Verschiebung and the choice of a coordinate for the first component is the universal deformation over reduced bases of $W$ with the corresponding structures. It follows that the triple ($W$, Frobenius lift, Verschiebung) has no deformations within unital ring-schemes and a one-parameter deformation in the non-unital category with parameter ``space'' $\G_a / \G_m$ over $\Z [q]$. The $q$-deformation of \cite{O2} turns out to be isomorphic to $W^{(q)}$.

Over non-reduced bases  the deformation theory of the triple $(W$, Frobenius lift, Verschiebung) is richer and Theorem \ref{t3} in section \ref{sec:2} allows to determine it in principle. The theorem also allows a simple proof of the decomposition theorem $W_{S \cdot T} = W_S \verk W_T$ for coprime divisor stable sets $S$ and $T$ due to Auer, c.f. \cite{A}. At the end of section \ref{sec:5} we discuss the deformations of $W$ for integer values of $q$ studied in \cite{L} and \cite{O1}.

Throughout the paper we work with Witt vectors for divisor stable sets. Since in general Frobenius and Verschiebung are not endomorphisms of those, one needs to work with projective systems of rings indexed by divisor stable subsets. As our main technical tool we prove a Cartier--Dieudonn\'e theorem for them. A direct proof is given in section \ref{sec:1}. A more conceptual proof is also possible using the theory of Witt vectors for certain inductive systems of rings which is developed in the appendix. This theory is a very natural generalization of Witt vector theory for individual rings and may be of independent interest. 
\section{$q$-deformed Witt vectors} \label{sec:2neu}
Convention: In this note all rings and algebras will be commutative and associative but not always unital. As usual ``non-unital'' means ``not necessarily unital''.

A non-empty subset $S$ of the natural numbers $\N = \{ 1,2, \ldots \}$ is called {\it divisor stable} if $n \in S$ and $d \mid n$ imply that $d \in S$. In particular $1 \in S$. For $n \in S$ the sets $S / n = \{ \nu \in S \mid \nu n \in S \} \subset S$ and $S (n) = \{ \nu \in S \mid n \nmid \nu \} \subset S$ are again divisor stable. We assume that the reader is familiar with the rings of $S$-Witt vectors $W_S (A)$ defined for all commutative rings $A$. The ring $W_S (A)$ is unital if and only if $A$ is unital. Set theoretically $W_S (A) = A^S$ and hence $W_S$ is a (unital) ring-scheme over $\spec \Z$ whose underlying scheme is $\A^S = \spec \Z [S]$. Here $\Z [S] = \Z [t_n \mid n \in S]$ is the free commutative unital algebra on the set $S$ or in other words the polynomial algebra in indeterminates $t_n$ for $n \in S$. The values of co-addition and co-multiplication
\[
\Delta_+ , \Delta_{\hullet} : \Z [S] \longrightarrow \Z [S] \otimes_{\Z} \Z [S] = \Z [x_n ; y_m \mid n,m \in S ]
\]
on the generators $n \in S$ are the Witt polynomials $\Sigma_n$ and $\Pi_n$ defining addition and multiplication in $W_S (A) = A^S$ i.e.:
\[
\Sigma_n = \Delta_+ (t_n) , \Pi_n = \Delta_{\hullet} (t_n) \quad \text{in} \; \Z [x_{\nu} ; y_{\mu} \mid \nu | n , \mu | n] \; .
\]
Here $x_{\nu} = t_{\nu} \otimes 1$ and $y_{\mu} = 1 \otimes t_{\mu}$. For every divisor stable subset $S'$ of $S$ there is a natural projection morphism $\pi : W_S \to W_{S'}$ of ring-schemes. There is a morphism $s : W_{S'} \to W_S$ of schemes such that $\pi \verk s = \id$. Equivalently $\pi : W_S (A) \to W_{S'} (A)$ is surjective for all rings $A$. For $n \in S$ Verschiebung $V_n : W_{S/n} \to W_S$ is an additive morphism which fits into an exact sequence
\[
0 \longrightarrow W_{S/n} \xrightarrow{V_n} W_S \xrightarrow{\pi} W_{S(n)} \longrightarrow 0 \; .
\]
Since $W_{\{ 1 \} } = \G_a$ additively, it follows that for finite $S$ the underlying additive group scheme of $W_S$ is an iterated extension of $\G_a$'s. In particular $W_S \otimes \Q$ is a commutative unipotent group scheme over $\spec \Q$ and hence the logarithm provides an isomorphism
\[
\log : W_S \otimes \Q \silo T_0 W_S \otimes \Q
\]
of additive group schemes over $\spec \Q$ c.f. \cite{DG} IV, \S\,2 n$^{\text o}$ 4 Proposition 4.1 (iii). Here $T_0 W_S$ is the tangent group scheme to $W_S$ along the zero section. It is canonically identified with $\A^S$ with componentwise addition. Consider the morphism $\uS : \A^S \to \A^S$ which on $A$-valued points maps $(x_d)_{d \in S}$ to $(d x_d)_{d \in S}$. Then the additive isomorphism
\[
\log_S = \uS \verk \log : W_S \otimes \Q \silo T_0 W_S \otimes \Q
\]
is induced by a unique additive morphism
\[
\log_S : W_S \longrightarrow T_0 W_S \cong \A^S \; .
\]
This is the so-called ghost map. Explicitly, we have
\[
\log_S ((a_d)_{d \in S}) = \Big( \sum_{d \mid n} d a^{n/d}_d \Big)_{n \in S}
\]
on $A$-valued points. The preceding assertions also hold for arbitrary $S$ as one sees by taking the projective limit over the finite divisor-stable subsets $S_0$ of $S$. In the standard approach to Witt vector theory, the ghost map is used to define the ring-structure. A new approach giving the ring-structure directly was introduced in \cite{CD}. The morphism $\log_S$ is a morphism of (unital) ring-schemes if the ring-scheme structure on $T_0 W_S \cong \A^S$ is defined componentwise and $\A^1$ carries the standard ring-structure. With the identifications $\Gamma (W_S, \Oh) = \Z [x_d \mid d \in S]$ and $\Gamma (T_0 W_S, \Oh) = \Z [u_d \mid d \in S]$ the induced ring-homomorphism
\[
\log^*_S : \Gamma (T_0 W_S, \Oh) \longrightarrow \Gamma (W_S , \Oh)
\]
is determined by the formulas
\[
\log^*_S (u_n) = \sum_{d \mid n} d x^{n/d}_d \quad \text{for} \; n \in S \; .
\]
These are also the components of the formal logarithm with Jacobian $\uS$ for the formal $|S|$-dimensional group-law determined by the polynomials $\Sigma_n$ for $n \in S$. 

{\bf Remark.} The map $\uS : A^S \to A^S$ on the product ring $A^S$ is additive. Viewing the elements of $A^S$ as formal power series in $x$ over $A$ with exponents in $S$ we have $\uS = x \frac{d}{dx}$. In this context recall that if additively $W (A)$ is viewed as the formal multiplicative group of power series over $A$ the ghost map is given by $x \frac{d}{dx} \verk \log$.  

We now recall the Frobenius and Verschiebung morphisms. For $n \in S$, the Frobenius morphism $F_n : W_S \to W_{S/n}$ is a morphism of ring-schemes. The following relations hold in an evident sense\\
$V_n \verk V_m = V_{nm} : W_{S/nm} \to W_S \; , \, F_n \verk F_m = F_{nm} : W_S \to W_{S/nm}$ if $nm \in S$\\
$F_n \verk V_n = n \, \id : W_{S/n} \to W_{S/n}$ for $n \in S$\\
$F_n \verk V_m = V_m \verk F_n : W_{S/m} \to W_{S/n}$ for $(n,m) = 1$ and $nm \in S$.\\
For a prime number $p \in S$, the Frobenius morphism $F_p$ reduces $\mod p$ to the $p$-th power map: the following diagram commutes for all rings $A$
\begin{equation}
\label{eq:1a}
\xymatrix{
W_S (A) \ar[rr]^{F_p} \ar[d] & & W_{S/p} (A) \ar[d] \\
W_S (A) / p \ar[r]^{(\,)^p} & W_S (A) / p \ar[r]^-{\pi} & W_{S/p} (A) / p \; .
}
\end{equation}
Here the vertical maps are the reduction maps $\mod p$. Of course the bottom line could also be replaced by
\[
W_S (A) / p \xrightarrow{\pi} W_{S/p} (A) / p \xrightarrow{(\,)^p} W_{S/p} (A) / p \; .
\]

Finally consider the functor $P$ on rings defined by $P (A) = (A , \cdot)$. It is represented by the monoid scheme $P = \spec \Z [t]$ with co-multiplication $\Delta_{\hullet}$ and co-unit $\varepsilon_1$ determined by $\Delta_{\hullet} (t) = t \otimes t$ and $\varepsilon_1 (t) = 1$. The Teichm\"uller map is the multiplicative morphism
\[
\omega : P \longrightarrow W_S
\]
which on $A$-valued points sends $a \in A = P (A)$ to $(a \delta_{d,1})_{d \in S}$ in $A^S = W_S (A)$. Here $\delta_{\nu , \mu} = 1$ if $\nu = \mu$ and $\delta_{\nu, \mu} = 0$ if $\nu \neq \mu$. For every commutative unital ring $R$, the whole situation can be base-changed to $\spec R$ and we then speak of Witt vector schemes over $R$ etc.

We now describe two deformations of Witt vector theory which will later turn out to be isomorphic and universal in a suitable sense. The first one, $W^{(q)}$ is obtained by a simple modification of the usual Witt vector functor. The second one, $\oW^{1-q}$ was introduced in \cite{O2} using a $q$-deformed ghost map.

Let $A$ be a unital algebra over the polynomial ring $\Z [q]$. We denote by $A^{(q)}$ the ring with underlying additive group $(A , +)$ and twisted multiplication $x \ast y = q x y$. It is unital if and only if $A$ is a $\Z [q, q^{-1}]$-algebra. Setting $W^{(q)}_S (A) = W_S (A^{(q)})$ we a obtain a commutative non-unital ring scheme $W^{(q)}_S$ over $\spec \Z [q]$ whose underlying scheme is $\A^S$. The base change of $W^{(q)}_S$ to $\spec \Z [q , q^{-1}]$ is a unital ring-scheme. By the Yoneda lemma the Frobenius and Verschiebung maps for $n \in S$
\[
F_n : W_S (A^{(q)}) \longrightarrow W_{S/n} (A^{(q)}) \quad \text{and} \quad V_n : W_{S/n} (A^{(q)}) \longrightarrow W_S (A^{(q)})
\]
come from morphisms $F_n : W^{(q)}_S \to W^{(q)}_{S/n}$ and $V_n : W^{(q)}_{S/n} \to W^{(q)}_S$. They have the same properties as the ones recalled above for the usual Witt vector schemes. In particular the commutative group schemes $W^{(q)}_S$ are unipotent over $\spec \Q [q]$ and hence there is the $\log$-isomorphism
\[
\log : W^{(q)}_S \otimes \Q \silo T_0 W^{(q)}_S \otimes \Q
\]
of additive group schemes over $\spec \Q [q]$. Here $T_0 W^{(q)}_S$ is the tangent group scheme to $W^{(q)}_S$ over $\spec \Z [q]$ along the zero section. As before $T_0 W^{(q)}_S \cong \A^S$ canonically and we have the morphism $\uS$ defined as before. The additive isomorphism
\[
\log_S = \uS \verk \log : W^{(q)}_S \otimes \Q \silo T_0 W^{(q)}_S \otimes \Q
\]
is induced by a unique additive morphism over $\Z [q]$
\[
\log_S : W^{(q)}_S \longrightarrow T_0 W^{(q)}_S \cong \A^S \; .
\]
On $A$-valued points the induced (ghost-)map
\begin{equation}
\label{eq:1n}
\log_S : A^S \longrightarrow (A^{(q)})^S
\end{equation}
is given by the formula
\[
\log_S ((a_d)_{d \in S}) = \big( \sum_{d \mid n} d q^{\frac{n}{d}-1} a^{\frac{n}{d}}_d \Big)_{n \in S} \; .
\]
Note here that $a^{\ast n} = q^{n-1} a^n$ for $a \in A , n \ge 1$ by definition of the $q$-twisted multiplication in $A^{(q)}$. The map $\log_S$ in \eqref{eq:1n} is a ring-homomorphism if on the right we view $(A^{(q)})^S$ as a ring under componentwise addition and multiplication. On the left we take the ring-structure on the set $A^S$ coming from the identification $W^{(q)}_S (A) = A^S$ as sets. Via the identifications $\Gamma (W^{(q)}_S , \Oh) = \Z [q] [x_d \mid d \in S]$ and $\Gamma (T_0 W^{(q)}_S , \Oh) = \Z [q] [u_d \mid d \in S]$ the $\Z [q]$-algebra homomorphism
\[
\log^*_S : \Gamma (T_0 W^{(q)}_S , \Oh) \longrightarrow \Gamma (W^{(q)}_S , \Oh)
\]
is given by the formulas
\[
\log^*_S (u_n) = \sum_{d \mid n} d q^{\frac{n}{d}-1} x^{d/n}_d = q^{-1} \sum_{d \mid n} d (q x_d)^{n/d} \; .
\]
It follows that the universal polynomials for addition and multiplication and the Frobenius and Verschiebung morphisms are obtained from the usual ones by multiplying the variables by $q$ and dividing the resulting polynomial by $q$. For $S = \{ 1,p \}$ for example, setting $a = (a_1 , a_p) , b = (b_1 , b_p)$ we have:
\begin{align*}
\Sigma_1 (a , b) & = a_1 + b_1 \\
\Sigma_p (a,b) & = a_p + b_p - q^{p-1} \sum^{p-1}_{\nu = 1} p^{-1} {p \choose \nu} a^{\nu}_1 b^{p-\nu}_1 \\
\Pi_1 (a , b) & = q a_1 b_1 \\
\Pi_p (a,b) & = q p a_p b_p + q^p (a^p_1 b_p + a_p b^p_1) \; .
\end{align*}
Consider the functor $P^{(q)}$ on $\Z [q]$-algebras defined by $P^{(q)} (A) = (A^{(q)} , \ast) = (A , \ast)$ where $x \ast y = q x y$ as above. It is represented by the semigroup scheme $P^{(q)} = \spec \Z [q] [t]$ over $\spec \Z [q]$ with co-multiplication $\Delta_{\hullet}$ determined by $\Delta_{\hullet} (t) = qt \otimes t$. The base change to $\Z [q , q^{-1}]$ is a monoid scheme with co-unit $\varepsilon_1 (t) = q^{-1}$. There is a unique morphism $\omega^{(q)} : P^{(q)} \to W^{(q)}_S$ of multiplicative semigroup schemes which on $\Z [q]$-algebras $A$ becomes the ordinary Teichm\"uller map
\[
(A^{(q)} , \hullet) \longrightarrow W_S (A^{(q)}) \; , \; \omega^{(q)} (a) = (a \delta_{d,1})_{d \in S} \; .
\]
Base changed to $\Z [q, q^{-1}]$ the map $\omega^{(q)}$ becomes a morphism of monoid schemes.

\begin{rem}
In \cite{L} and \cite{O1} certain $q$-deformations of $W$ over $\spec \Z$ were studied for integer values of $q$. The ghost map is the same as above but the induced multiplication on the Witt vectors is different since the ghost side is viewed as the ring $A^S$ and not as $(A^{(q)})^S$. The additive structure is the same though.
\end{rem}

For every polynomial $g (q)$ in $\Z [q]$ a ring scheme $\oW^{g (q)}$ over $\Z [q]$ was introduced in \cite{O2}. The construction can be generalized to every divisor-stable subset $S$ of $\N$ using the same arguments. The most relevant case for us is $\oW^{1-q}_{\!\!S}$. It is defined as follows: There is a unique functorial ring structure on $A^S$ for all $\Z [q]$-algebras $A$ such that the following ``ghost'' map is a homomorphism of non-unital rings
\[
\Gh_S : A^S \longrightarrow (A^{(q)})^S \; , \; \Gh_S ((a_n)_{n \in S}) = \Big( \sum_{d \mid n} dq^{-1} (1 - (1-q)^{n/d}) a^{n/d}_d \Big)_{n \in S} \; .
\]

It follows from Propositions 6.1 and 6.2 of \cite{O2} that there are unique Frobenius and Verschiebung morphisms $F_n : \oW^{1-q}_{\!\!S} \to \oW^{1-q}_{S/n}$ and $V_n : \oW^{1-q}_{S/n} \to \oW^{1-q}_{\!\!S}$ that correspond via the ghost maps to the maps
\[
F_n ((a_{\nu})_{\nu \in S}) = (a_{n\nu})_{\nu \in S/n} \quad \text{and} \quad V_n ((a_{\nu})_{\nu \in S/n}) = (n \delta_{n\mid \nu} a_{\nu/n})_{\nu \in S} \; .
\]
They satisfy the same relations as for the usual Witt vectors and in particular $F_p$ reduces $\mod p$ to the $p$-th power morphism. The modified $\log$-morphism $\log_S = \uS \verk \log$ exists over $\Z [q]$, i.e. $\log_S : \oW^{1-q}_{\!\!S} \to T_0 \oW^{1-q}_{\!\!S}$ and on $A$-valued points it equals $\Gh_S$. Writing 
\[
\Gamma (\oW^{1-q}_{\!\!S} , \Oh) = \Z [q] [x_d \mid d \in S]\quad \text{and} \quad \Gamma (T_0 \oW^{1-q}_{\!\!S} , \Oh) = \Z [q] [u_d \mid d \in S]
\]
we have
\[
\log^*_S (u_n) = \sum_{d\mid n} dq^{-1} (1 - (1-q)^{n/d}) x^{n/d}_d \quad \text{for} \; n \in S \; .
\]
The universal polynomials describing $\oW^{1-q}_{\!\!S}$ are more complicated than the ones for $W^{(q)}_S$. For $S = \{ 1, p \}$ for example they are the following:
\begin{align*}
\Sigma_1 (a , b) & = a_1 + b_1 \\
\Sigma_p (a,b) & = a_p + b_p - h (q) \sum^{p-1}_{\nu-1} p^{-1} {p \choose \nu} a^{\nu}_1 b^{p-\nu}_1 \\
\Pi_1 (a , b) & = q a_1 b_1 \\
\Pi_p (a,b) & = qpa_p b_p + qh (q) (a^p_1 b_p + b^p_1 a_p) + qh (q) r (q) a^p_1 b^p_1 \; .
\end{align*}
Here we have set
\[
h (q) = q^{-1} (1 - (1 -  q)^p) = 1 + (1-q) + \ldots + (1-q)^{p-1}
\]
and
\[
r (q) = p^{-1} (h (q) - q^{p-1}) = p^{-1} q^{-1} (1 - q^p - (1-q)^p) \in \Z [q] \; .
\]
For the integrality of $r (q)$ note that
\[
1 - q^p - (1 - q)^p \equiv 1 - q^p - (1 - q^p) \mod p \equiv 0 \mod p
\]
and
\[
1 - q^p - (1-q)^p \equiv 0 \mod q \; .
\]
For any polynomial $g (q) \in \Z [q]$ consider the ring homomorphism $\alpha : \Z [q] \to \Z [q]$ with $\alpha (q) = 1 -g (q)$ and set
\[
\oW^{g (q)}_{\!\!S} = \oW^{1-q}_{\!\!S} \otimes_{\Z [q] , \alpha} \Z [q] \; .
\]
In \cite{O2} the non-unital ring schemes $\oW^{g (q)}_{\!\!S}$ and their underlying group schemes were investigated in some detail for $S = \N$. It is possible to prove directly that $W^{(q)}_S$ and $\oW^{1-q}_{\!\!S}$ together with their extra structures are isomorphic. However this also follows without effort from the universality property of $W^{(q)}_S$, c.f. Corollary \ref{t7} and the subsequent example. 
\section{A variant of the Cartier--Dieudonn\'e lemma} \label{sec:1}
In this section we prove two technical results which are the basis for the deformation theory of the Witt vector scheme in the next section. For a divisor stable subset $T$ of $\N$ we write $S \peq T$ to signify that $S$ is a divisor stable subset of $T$. Let $\ubA = (A_S)_{S \peq T}$ be a projective system of rings on $T$ i.e. a contravariant functor from the ordered set $\{ S \peq T \}$ viewed as a category to the category of (commutative) unital or non-unital rings. For $S_1 \peq S_2 \peq T$ we denote the transition maps simply by $\pi : A_{S_2} \to A_{S_1}$. We say that $\ubA$ is equipped with commuting Frobenius lifts if for all prime numbers $p \in S \peq T$ there are ring homomorphisms
\[
F_p : A_S \longrightarrow A_{S/p}
\]
with the following properties:\\
1) For all $a \in A_S$ we have the congruence
\[
F_p (a) \equiv \pi (a)^p \mod p A_{S/p} \; .
\]
2) The $F_p$ are natural in the sense that for $p \in S_1 \peq S_2 \peq T$ the diagram
\[
\xymatrix{
A_{S_2} \ar[r]^{F_p} \ar[d]_{\pi} & A_{S_2 /p} \ar[d]^{\pi} \\
A_{S_1} \ar[r]^{F_p} & A_{S_1/p}
}
\]
commutes.\\
3) For prime numbers $l$ with $pl \in S \peq T$, the diagram
\[
\xymatrix{
A_S \ar[r]^{F_p} \ar[d]_{F_l} & A_{S/p} \ar[d]^{F_l} \\
A_{S/l} \ar[r]^{F_p} & A_{S/pl}
}
\]
commutes.

For each $n \in S$ we define $F_n : A_S \to A_{S/n}$ as the composition $F_n = F^{\nu_1}_{p_1} \verk \ldots \verk F^{\nu_r}_{p_r}$ where $n = p^{\nu_1}_1 \cdots p^{\nu_r}_r$ is the prime decomposition of $n$. By 2) this is well defined and the naturality and commutation properties 2) and 3) then hold without the assumption that $p$ and $l$ are prime numbers. Morphisms of projective systems of rings on $T$ with commuting Frobenius lifts are defined in the obvious way and we obtain a category $\Rh \Fh_T$ both in the unital and in the non-unital cases. Note that for a ring $A$, Witt vector theory gives us an object $\ubW (A) := (W_S (A))_{S \peq T}$ of $\Rh \Fh_T$. Generalizing a well known fact from the theory of Witt vector rings we show that $\ubW$ has the following universal property:

\begin{prop}
\label{t1}
Assume that for $\ubA$ in $\Rh \Fh_T$ the ring $A = A_{\{ 1 \} }$ has no $T$-torsion. Then there is a unique morphism $\alpha = (\alpha_S)_{S \peq T} : \ubA \to \ubW (A)$ in $\Rh \Fh_T$ with $\alpha_S = \id_A$ for $S = \{ 1 \}$. The morphism $\alpha$ is functorial in $\ubA$. Explicitly it is given as follows. The composition of $\alpha_S : A_S \to W_S (A)$ with the (injective) ghost map $\log_S : W_S (A) \to A^S$ is given by the formula
\[
(\log_S \verk \alpha_S) (a) = (\pi F_n (a))_{n \in S} \quad \text{for} \; a \in A_S \; .
\]
Here $\pi$ denotes the map $\pi : A_{S/n} \to A_{ \{ 1 \} } = A$.
\end{prop}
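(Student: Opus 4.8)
The plan is to first show that $\alpha$, if it exists, is forced by the stated formula (so that uniqueness and functoriality are immediate), and then to establish existence by an induction over finite divisor-stable sets whose hard core is a relative form of Dwork's lemma.

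Since $A$ has no $T$-torsion, each ghost map $\log_S\colon W_S(A)\to A^S$ ($S\peq T$) is injective: if $\log_S(\xi)=0$, then induction on $n$ with respect to divisibility gives $n\,\xi_n=-\sum_{d\mid n,\,d\ne n}d\,\xi_d^{\,n/d}=0$, and multiplication by $n\in T$ is injective on $A$. Hence every $\alpha_S$ is determined by $\log_S\verk\alpha_S$. Let now $\alpha=(\alpha_S)$ be a morphism in $\Rh\Fh_T$ with $\alpha_{\{1\}}=\id_A$. Using the Frobenius--ghost relation (the $n\nu$-th ghost component of $\xi$ equals the $\nu$-th ghost component of $F_n\xi$) for $\nu=1$, the $n$-th ghost component of $\alpha_S(a)$ equals the first ghost component of $F_n\alpha_S(a)=\alpha_{S/n}F_n(a)$; the first ghost component of any element of $W_{S/n}(A)$ is just its image under the projection $\pi\colon W_{S/n}(A)\to W_{\{1\}}(A)=A$, so compatibility of $\alpha$ with the transition maps turns this into $\alpha_{\{1\}}(\pi F_n(a))=\pi F_n(a)$. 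This gives the displayed formula, hence uniqueness, and the formula is manifestly functorial in $\ubA$.

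For existence it suffices to show that $g_S(a):=(\pi F_n(a))_{n\in S}\in A^S$ lies in the image of $\log_S$ for every $a\in A_S$. Indeed, $\alpha_S:=\log_S^{-1}\verk g_S$ is then well defined and automatically a ring homomorphism, because $\log_S$ is an injective ring homomorphism and $g_S$, being $(\pi F_n)_n$ componentwise, is one; and the family $(\alpha_S)_S$ is automatically a morphism in $\Rh\Fh_T$, since after composing with the injective ghost maps the compatibilities with the transition maps and with the $F_p$ reduce to $\pi\,g_{S_2}(a)=g_{S_1}(\pi a)$ and $g_S(a)_{p\nu}=g_{S/p}(F_p a)_\nu$, which hold by naturality of the $F_n$ and by $F_{p\nu}=F_\nu F_p$. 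Finally, since $W_S(A)=\varprojlim W_{S_0}(A)$ over the finite $S_0\peq S$, with $\log_S=\varprojlim\log_{S_0}$, and $g_S(a)$ restricts to $g_{S_0}(\pi a)$, it is enough to treat finite $S$ and argue by induction on $|S|$, the case $S=\{1\}$ being trivial. For $|S|\ge2$ one picks $m\in S$ maximal for divisibility and sets $S'=S\setminus\{m\}$, a divisor-stable set with $|S'|=|S|-1$, so that $\alpha_{S'}$ exists by induction. Because $m$ is maximal, the only element of $S$ divisible by $m$ is $m$ itself; hence the vector $\xi\in W_S(A)=A^S$ whose coordinates indexed by $S'$ are those of $\alpha_{S'}(\pi a)$ and whose $m$-th coordinate is an arbitrary $c\in A$ satisfies $\log_S(\xi)_n=\pi F_n(a)$ for all $n\ne m$, and $\log_S(\xi)_m=\sum_{d\mid m,\,d\ne m}d\,\alpha_{S'}(\pi a)_d^{\,m/d}+m\,c$. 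Consequently $g_S(a)$ lies in the image of $\log_S$ if and only if
\[
r:=\pi F_m(a)-\sum_{d\mid m,\,d\ne m}d\,\alpha_{S'}(\pi a)_d^{\,m/d}\in mA ,
\]
and by the Chinese Remainder Theorem this has to be verified modulo $p^{v_p(m)}A$ for every prime $p\mid m$.

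This last congruence is the announced variant of the Cartier--Dieudonn\'e lemma, and I expect it to be the main obstacle. The partial sum above depends only on the values $\pi F_d(a)$ for the proper divisors $d$ of $m$, since the Witt components $\alpha_{S'}(\pi a)_d$ are the unique solution over $A\otimes\Q$ of $\sum_{e\mid d}e\,\alpha_{S'}(\pi a)_e^{\,d/e}=\pi F_d(a)$. Fixing $p\mid m$ and writing $m=p^{k}m'$ with $p\nmid m'$, I would set $b:=F_{m'}(a)$, so that $\pi F_m(a)=\pi F_p^{\,k}(b)$, and prove the congruence by induction on $k$ --- lifting it from modulus $p^{j}$ to modulus $p^{j+1}$ exactly as in the classical proof of Dwork's lemma --- with base case $k=1$ being precisely condition 1), $F_p(b)\equiv\pi(b)^p\bmod p$. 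The two facts that drive the induction are that each $F_p$ is a \emph{ring} homomorphism (so it is additive and commutes with taking $p$-th powers), and condition 1); the commutation relations $F_pF_l=F_lF_p$ enter to reconcile the factor $m'$ and the mixed divisors of $m$ with the $p$-power tower. The real difficulty --- and the source of the bookkeeping --- is that the classical Dwork argument would invoke a genuine Frobenius lift $\sigma_p\colon A\to A$ of the base ring, which is unavailable here; its role has to be played throughout by the maps $F_p$ of the projective system together with condition 1) and the commutation relations, and organizing this modulo $p^{v_p(m)}$ is the one laborious step of the proof.
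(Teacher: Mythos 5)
Your overall architecture matches the paper's: uniqueness forced by pushing the ghost side through $F_n$, existence by showing $(\pi F_n(a))_{n\in S}$ lies in $\log_S(W_S(A))$, reduction to finite $S$, and a Chinese-Remainder reduction to one congruence $\bmod\ p^{v_p(m)}$ for each prime $p\mid m$. Your uniqueness argument is essentially the paper's, and the CRT reduction and the ``automatically a ring homomorphism / automatically in $\Rh\Fh_T$'' checks are fine. But the step you label ``the announced variant of the Cartier--Dieudonn\'e lemma'' is not merely laborious bookkeeping: as sketched, the induction on the $p$-adic valuation does not close.

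Your target congruence
\[
\pi F_m(a)\;\equiv\;\sum_{d\,\propdiv\, m} d\,a_d^{\,m/d}\ \pmod{p^{v_p(m)}A}
\]
is stated in $A=A_{\{1\}}$, with $a_d\in A$ the Witt components of $\alpha_{S'}(\pi a)$. The Dwork-style argument you invoke needs a Frobenius relation \emph{on the Witt components themselves}, of the form $\sigma(a_d)\equiv a_d^p\bmod p$, so that one may compare $\sigma(a_d)^{m/pd}$ with $a_d^{m/d}$ at the correct $p$-power via the lemma $\alpha\equiv\beta\bmod p\Rightarrow\alpha^{k}\equiv\beta^{k}\bmod p^{v_p(k)+1}$. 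No such $\sigma$ exists on $A$; the only Frobenius lifts in the data are $F_p\colon A_S\to A_{S/p}$, and they do not act on $a_d\in A$. Setting $b:=F_{m'}(a)$ moves $\pi F_m(a)=\pi F_p^{k}(b)$ into the projective system, but the target $\sum d\,a_d^{m/d}$ remains stranded in $A$, and iterating condition 1) on $b$ only yields $\pi F_p^{j+1}(b)\equiv(\pi F_p^{j}(b))^p\bmod p$ --- a single power of $p$ at each step, which cannot be bootstrapped to a congruence $\bmod\ p^{j+1}$ without Frobenius control on the $a_d$.

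The missing idea --- which is the paper's actual proof --- is to prove a strictly \emph{stronger} claim inside the projective system, not in $A$: for each $n\in S$ there exist lifts $b_d\in A_{S/d}$ of the Witt components (so $\pi(b_d)=a_d$ in $A$) satisfying the \emph{exact equality} $F_n(a)=\sum_{d\mid n}d\,\pi(b_d)^{n/d}$ in $A_{S/n}$, constructed by induction on $n\in S$ (not on $|S|$). The whole point of lifting to $A_{S/d}$ is that $F_p$ acts on $b_d$: applying $F_p$ to the equality for $n/p$ gives $F_n(a)=\sum_{d\mid n/p}d\,\pi F_p(b_d)^{n/pd}$ in $A_{S/n}$, and then condition 1) in the form $F_p(b_d)\equiv\pi(b_d)^p\bmod p\,A_{S/dp}$, together with the power lemma above, delivers $F_n(a)\equiv\sum_{d\propdiv n}d\,\pi(b_d)^{n/d}\bmod p^{v_p(n)}A_{S/n}$ in a single pass for each $p\mid n$, after which CRT produces $b_n$. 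Projecting along $\pi\colon A_{S/n}\to A$ then gives exactly your $A$-level congruence. Once this lift into the system is added, the rest of your argument (CRT reduction, ring-homomorphism and $\Rh\Fh_T$-compatibility, passage to infinite $S$ by continuity) goes through as you describe.
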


\begin{proof}
We have to show that there is a unique family of ring homomorphisms $\alpha_S : A_S \to W_S (A)$ for $S \peq T$ which commute with the projections $\pi$ and the Frobenius maps. Since $A$ has no $T$-torsion by assumption, the ghost maps
\[
\log_S : W_S (A) \longrightarrow A^S
\]
give isomorphisms onto their images $X_S (A) := \log_S (W_S (A))$. On the ghost side the projection $\pi : X_{S_2} (A) \to X_{S_1} (A)$ for $S_1 \peq S_2$ is induced by the projection $A^{S_2} \to A^{S_1}$. The Frobenius map
\[
F_n : X_S (A) \longrightarrow X_{S/n} (A) \quad \text{for} \; n \in S \peq T
\]
is the restriction of the map
\[
F_n : A^S \longrightarrow A^{S/n} \; , \; (a_{\nu})_{\nu \in S} \longmapsto (a_{nd})_{d \in S/n} \; .
\]

The uniqueness assertion follows from the next claim:

\begin{claimnn}
There is a unique family of ring homomorphisms 
\[
\beta_S : A_S \longrightarrow A^S \quad \text{for} \; S \peq T \; , \; \text{where} \; \beta_{\{ 1 \} } = \id
\]
commuting with the respective projections $\pi$ and Frobenius maps $F_p$. It is given by the formula
\begin{equation}
\label{eq:1}
\beta_S (a) = (\pi F_n (a))_{n \in S} \quad \text{for} \; a \in A_S
\end{equation}
where $\pi : A_{S/n} \to A_{\{ 1 \} } = A$.
\end{claimnn}

As for the uniqueness and formula \eqref{eq:1} we argue as follows. Write $\beta_S (a) = (x_n)_{n \in S}$. For $n \in S$ set $S' = \{ d \in S \mid d \mid n \}$. Then $S' \peq S$ and $S'/n = \{ 1 \}$. Consider the commutative diagram:
\[
\xymatrix{
A_S \ar[r]^{\pi} \ar[d]_{\beta_S} & A_{S'} \ar[r]^{F_n} \ar[d]_{\beta_{S'}} & A \ar[d]^{\beta_{\{ 1 \} } = \id } \\
A^S \ar[r]^{\pi} & A^{S'} \ar[r]^{F_n} & A \; .
}
\]
We find
\begin{align*}
x_n & = F_n ((x_d)_{d \in S'}) = F_n \pi \beta_S (a) = F_n \pi (a) \\
& = \pi F_n (a) \; .
\end{align*}
Here the final equality is due to the commutative diagram where $A_{S'/n} = A$
\[
\xymatrix{
A_S \ar[r]^{\pi} \ar[d]_{F_n} & A_{S'} \ar[d]^{F_n} \\
A_{S/n} \ar[r]^-{\pi} & A_{S'/n} \; .
}
\]
Thus $\beta = (\beta_S)$ must be given by \eqref{eq:1}. It is straightforward to check that \eqref{eq:1} does indeed define the desired family of maps.

For the proof of the proposition it remains to show that $\beta_S (A_S) \subset X_S (A)$ for all $S \peq T$. Thus for every $a \in A_S$ there have to be (uniquely determined) elements $a_d \in A$ for $d \in S$ with
\begin{equation}
\label{eq:2}
\pi F_n (a) = \sum_{d \mid n} d a^{n/d}_d \quad \text{in $A$ for every} \; n \in S \; .
\end{equation}
For this we prove the following stronger statement by induction on $n \in S$.

\begin{claimnn}
Given $n \in S$ and $b \in A_S$, there exist elements $b_d \in A_{S/d}$ for all $d \mid n$ such that we have
\begin{equation}
\label{eq:3}
F_n (b) = \sum_{d \mid n} d \pi (b_d)^{n/d} \quad \text{in} \; A_{S/n} \; .
\end{equation}
Here $\pi (b_d)$ is the projection of $b_d$ along $\pi : A_{S/d} \to A_{S/n}$. 
\end{claimnn}

For $n = 1$ we take $b_1 = b$. Assume that the claim holds for proper divisors of $n$. For any prime divisor $p$ of $n$ we then know that
\[
F_{n/p} (b) = \sum_{d \mid (n/p)} d \pi (b_d)^{n / pd} \quad \text{in} \; A_{S / (n/p)}
\]
for elements $b_d \in A_{S/d}$ and corresponding $\pi : A_{S/d} \to A_{S / (n/p)}$. Applying $F_p$ we get
\[
F_n (b) = \sum_{d \mid (n/p)} d \pi F_p (b_d)^{n /pd} \quad \text{in} \; A_{S/n}
\]
where the $\pi$'s are projections $A_{S/dp} \to A_{S/n}$. By assumption
\[
F_p (b_d) \equiv \pi (b_d)^p \mod p A_{S/dp}
\]
and therefore
\[
F_p (b_d)^{n/pd} \equiv \pi (b_d)^{n/d} \mod p^{v_p (n/d)} A_{S/dp} \; .
\]
Here we have used the fact that $\alpha \equiv \beta \mod p$ implies that $\alpha^{p^i} \equiv \beta^{p^i} \mod p^{i+1}$ and hence $\alpha^k \equiv \beta^k \mod p^{v_p (k) + 1}$ for $k \ge 1$. It follows that
\begin{align*}
F_n (b) & \equiv \sum_{d \mid (n/p)} d \pi (b_d)^{n/d} \mod p^{v_p (n)} A_{S/n}  \\
& \equiv \sum_{d \propdiv n} d \pi (b_d)^{n/d} \mod p^{v_p (n)} A_{S/n} \; .
\end{align*}
Here and in the following the notation $d \propdiv n$ means that $d \mid n$ and $d \neq n$.

For the last step, note that if $d \mid n$ and $d \nmid (n/p)$ then $v_p (d) = v_p (n)$. Since $p \mid n$ was arbitrary we conclude that
\[
F_n (b) \equiv \sum_{d \underset{\neq}{|} n} d \pi (b_d)^{n/d} \mod n A_{S/n} \; .
\]
Hence an element $b_n \in A_{S/n}$ can be found so that \eqref{eq:3} holds. The explicit formula for $\log_S \verk \alpha_S$ shows that $\alpha = (\alpha_S)$ depends functorially on $\ubA$. 
\end{proof}

\begin{rem}
In the appendix we sketch a theory of Witt vector rings for ind-rings which elucidates the somewhat ad hoc proof of Proposition \ref{t1}.
\end{rem}

In general the morphism $\alpha$ in Proposition \ref{t1} will not be an isomorphism. For this more structure is required. We call a projective system $\ubA = (A_S)_{S \peq T}$ {\it continuous} if for all $S \peq T$ we have $A_S = \varprojlim_{S_0 \peq S} A_{S_0}$ where $S_0$ runs over the {\it finite} divisor stable subsets of $S$. Note that for finite $T$ continuity is automatic. Consider the category $\Rh\Fh\Vh_T$ whose objects are continuous projective systems $\ubA = (A_S)_{S \peq T}$ of rings on $T$ with commuting Frobenius lifts $F_p$ together with Verschiebung maps for all prime numbers $p \in S \peq T$
\[
V_p : A_{S/p} \longrightarrow A_S 
\]
with the following properties:\\
4) The $V_p$ are additive homomorphisms which are natural in the sense that for $p \in S_1 \peq S_2 \peq T$ the diagram
\[
\xymatrix{
A_{S_2/p} \ar[r]^{V_p} \ar[d]_{\pi} & A_{S_2} \ar[d]^{\pi} \\
A_{S_1/p} \ar[r]^{V_p} & A_{S_1}
}
\]
commutes.\\
5) For prime numbers $l$ with $pl \in S \peq T$ the diagram
\[
\xymatrix{
A_{S/p l} \ar[r]^{V_p} \ar[d]_{V_l} & A_{S/l} \ar[d]^{V_l} \\
A_{S/p} \ar[r]^{V_p} & A_S
}
\]
commutes.\\
6) The composition $A_{S/p} \xrightarrow{V_p} A_S \xrightarrow{F_p} A_{S/p}$ is $p$-multiplication i.e. $F_p \verk V_p = p$ for $p \in S \peq T$. For any prime $l$ with $l \neq p$ and $pl \in S \peq T$ the diagram
\[
\xymatrix{
A_{S/p} \ar[r]^{V_p} \ar[d]_{F_l} & A_S \ar[d]^{F_l} \\
A_{S/pl} \ar[r]^{V_p} & A_{S/l}
}
\]
commutes. \\
7) There are exact sequences of additive groups
\[
0 \longrightarrow A_{S/p} \xrightarrow{V_p} A_S \xrightarrow{\pi} A_{S (p)} \longrightarrow 0 \; .
\]
Here $S (p) = \{ d \in S \mid p \nmid d \}$. 

Morphisms between objects of $\Rh\Fh\Vh_T$ are defined in the obvious way. For a ring $A$, Witt vector theory gives an object $\ubW (A) = (W_S (A))_{S \peq T}$ of $\Rh\Fh\Vh_T$.

\begin{prop}
\label{t2}
Assume that for $\ubA$ in $\Rh\Fh\Vh_T$ the ring $A = A_{\{ 1 \} }$ has no $T$-torsion. Then the morphism $\alpha : \ubA \to \ubW (A)$ in $\Rh \Fh_T$ from Proposition \ref{t1} defines an isomorphism in $\Rh\Fh\Vh_T$. In particular it is an isomorphism in $\Rh\Fh_T$.
\end{prop}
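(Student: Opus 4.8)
The plan is to promote $\alpha=(\alpha_S)_{S\peq T}$, which Proposition~\ref{t1} supplies as a morphism in $\Rh\Fh_T$ with $\alpha_{\{1\}}=\id_A$, first to a morphism in $\Rh\Fh\Vh_T$ (i.e. to check that it also commutes with the Verschiebung maps) and then to show that each $\alpha_S$ is bijective. Once both are done the inverse family $(\alpha_S^{-1})$ automatically commutes with the projections $\pi$, the Frobenius lifts $F_p$ and the Verschiebungen $V_p$, so $\alpha$ is an isomorphism in $\Rh\Fh\Vh_T$. Moreover, since $\ubA$ is continuous by hypothesis and $\ubW(A)$ is continuous because $W_S(A)=A^S=\varprojlim_{S_0}W_{S_0}(A)$ over the finite $S_0\peq S$, and since $\alpha$ commutes with $\pi$, we have $\alpha_S=\varprojlim_{S_0}\alpha_{S_0}$; hence it is enough to treat the case of finite $T$, in which case everything reduces to showing that $\alpha_S$ is an isomorphism for every $S\peq T$.

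For the compatibility with Verschiebung I would fix a prime $p\in S\peq T$ and $b\in A_{S/p}$ and set $z=\alpha_S(V_p b)-V_p(\alpha_{S/p}b)\in W_S(A)$, the goal being to show $z=0$. Applying $\pi$ and using that $\alpha$ commutes with $\pi$ together with $\pi\verk V_p=0$ on both sides --- property 7) for $\ubA$, and the exact sequence $0\to W_{S/p}(A)\xrightarrow{V_p}W_S(A)\xrightarrow{\pi}W_{S(p)}(A)\to 0$ on the Witt side --- gives $\pi(z)=0$, so by exactness $z=V_p(w)$ for a unique $w\in W_{S/p}(A)$. Applying $F_p$ and using $F_p\verk\alpha_S=\alpha_{S/p}\verk F_p$ (morphism in $\Rh\Fh_T$) together with $F_p\verk V_p=p$ --- property 6) for $\ubA$, and the corresponding Witt relation --- gives $F_p(z)=p\,\alpha_{S/p}(b)-p\,\alpha_{S/p}(b)=0$, hence $pw=F_pV_p(w)=F_p(z)=0$. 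Since $p\in T$ and $A$ has no $T$-torsion, $W_{S/p}(A)$ embeds into $A^{S/p}$ via the ghost map and multiplication by $p$ is injective on $A^{S/p}$, so $w=0$ and therefore $z=0$. This shows $\alpha_S\verk V_p=V_p\verk\alpha_{S/p}$, so $\alpha$ is a morphism in $\Rh\Fh\Vh_T$.

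It remains to prove, for finite $T$, that $\alpha_S$ is bijective for all $S\peq T$, which I would do by induction on $|S|$. The case $S=\{1\}$ holds because $\alpha_{\{1\}}=\id$. If $|S|>1$, pick a prime $p\in S$ (a prime factor of any element $>1$ of $S$ lies in $S$ by divisor stability); then $S/p$ and $S(p)$ are divisor stable, and since $\nu\mapsto\nu p$ identifies $S/p$ with $\{m\in S:p\mid m\}$ we have $|S|=|S/p|+|S(p)|$ with both summands $\geq 1$, so $|S/p|,|S(p)|<|S|$ and by induction $\alpha_{S/p}$ and $\alpha_{S(p)}$ are isomorphisms. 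Now apply the five lemma to the diagram with exact rows
\[
\xymatrix{
0 \ar[r] & A_{S/p} \ar[r]^{V_p} \ar[d]_{\alpha_{S/p}} & A_S \ar[r]^{\pi} \ar[d]_{\alpha_S} & A_{S(p)} \ar[r] \ar[d]^{\alpha_{S(p)}} & 0 \\
0 \ar[r] & W_{S/p}(A) \ar[r]^{V_p} & W_S(A) \ar[r]^{\pi} & W_{S(p)}(A) \ar[r] & 0
}
\]
whose top row is the exact sequence of property 7) for $\ubA$, whose bottom row is the corresponding one for Witt vectors, and which commutes because $\alpha$ commutes with $\pi$ (Proposition~\ref{t1}) and with $V_p$ (previous paragraph); this yields that $\alpha_S$ is an isomorphism and completes the induction.

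I expect the only genuinely non-formal step to be the compatibility with Verschiebung in the second paragraph: one has to notice that it must be verified at all and then find the right argument, the point being that the injectivity of $V_p$ on torsion-free Witt vectors together with $F_p\verk V_p=p$ lets one conclude $z=0$ after merely computing $\pi(z)$ and $F_p(z)$, thereby avoiding a direct check on ghost components. The continuity reduction, the five-lemma argument, and the counting $|S|=|S/p|+|S(p)|$ (which is what makes the induction decrease) are then routine.
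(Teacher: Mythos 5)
Your proof is correct, and it differs from the paper's in one interesting way: the paper verifies the commutation $\alpha_S\verk V_p = V_p\verk\alpha_{S/p}$ by a direct ghost-component computation, applying the explicit formula $(\log_S\verk\alpha_S)(a)=(\pi F_n(a))_{n\in S}$ from Proposition~\ref{t1} and manipulating $\pi F_n V_p$ case by case depending on whether $p\mid n$; you instead derive it purely from the axioms by setting $z=\alpha_S(V_pb)-V_p(\alpha_{S/p}b)$, using property~7) to force $z=V_p(w)$, then $F_pV_p=p$ to force $pw=0$, and torsion-freeness to conclude $w=0$. Your argument is more structural and never touches ghost coordinates at that step, though it quietly leans on the same torsion-freeness hypothesis (to make $p$ injective on $W_{S/p}(A)$ via the ghost embedding) that the paper uses to make $\log_S$ injective; so neither approach gains generality, but yours is arguably cleaner to check. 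The rest --- the five-lemma induction on $|S|$ using the exact sequences from~7), the count $|S|=|S/p|+|S(p)|$, and the passage to infinite $S$ by continuity --- matches the paper's proof.
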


\begin{proof}
The main point is that for any prime $p \in S \peq T$ the diagram
\begin{equation}
\label{eq:4}
\xymatrix{
A_{S/p} \ar[r]^{V_p} \ar[d]_{\alpha_{S/p}} & A_S \ar[d]^{\alpha_S} \\
W_{S/p} (A) \ar[r]^{V_p} & W_S (A)
}
\end{equation}
commutes. Since $A$ has no $T$-torsion, the ghost map $\log_S$ on $W_S (A)$ is injective and hence it suffices to show that we have
\begin{equation}
\label{eq:5}
\log_S \verk V_p \verk \alpha_{S/p} = \log_S \verk \alpha_S \verk V_p \quad \text{on} \; A_{S /p} \; .
\end{equation}
For $a \in A_{S/p}$, using the explicit description of $\log_S \verk \alpha$ in Proposition \ref{t1} we find, setting $\delta_{p\mid n} = 1$ for $p \mid n$ and $= 0$ for $p \nmid n$:
\begin{align*}
(\log_S \verk \alpha_S \verk V_p) (a) & = (\log_S \verk \alpha_S) (V_p (a)) = (\pi F_n V_p (a))_{n \in S} \\
& = (\delta_{p \mid n} \pi F_n V_p (a))_{n \in S} \; .
\end{align*}
Namely, for $p \nmid n$ we have, by 6) and 7) above
\[
\pi F_n V_p = \pi V_p F_n = 0 \; .
\]
Using 6) again we therefore find
\begin{align*}
(\log_S \verk \alpha_S \verk V_p) (a) & = (\delta_{p \mid n} \pi F_{n/p} F_p V_p (a))_{n \in S} \\
& = p (\delta_{p \mid n} \pi F_{n/p} (a))_{n \in S} \\
& = V_p ((\pi F_n (a))_{n \in S/p}) \\
& = (V_p \verk \log_{S/p} \verk \alpha_{S/p}) (a) \\
& = (\log_S \verk V_p \verk \alpha_{S/p}) (a) \; .
\end{align*}
Thus we have shown \eqref{eq:5} and hence \eqref{eq:4}. Combining \eqref{eq:4} and 7) we get a commutative diagram with exact lines for all $p \in S \peq T$
\[
\xymatrix{
0 \ar[r] & A_{S/p} \ar[r]^{V_p} \ar[d]_{\alpha_{S/p}} & A_S \ar[r]^{\pi} \ar[d]_{\alpha_S}  & A_{S (p)} \ar[d]_{\alpha_{S (p)}} \ar[r] & 0 \\
0 \ar[r] & W_{S/p} (A) \ar[r]^{V_p} & W_S (A) \ar[r]^{\pi} & W_{S (p)} (A) \ar[r] & 0 
}
\]
Since $\alpha_{\{ 1 \} } = \id$, an induction with respect to $|S|$ now shows that $\alpha_S$ is an isomorphism for all finite $S$. The general case follows by the continuity of $\ubA$ and $\ubW$. 
\end{proof}
\section{Universality of Witt vector schemes} \label{sec:2}

As before, fix a divisor stable subset $T \subset \N$. Consider a projective system $\ubM = (M_S)_{S \peq T}$ of affine commutative ring-schemes $M_S = \spec B_S$ over a base ring $R$. As before, the transition maps are denoted by $\pi$. We say that $\ubM$ is equipped with commuting Frobenius lifts if for all prime numbers $p \in S \peq T$ there are morphisms of ring-schemes over $R$
\[
F_p : M_S \longrightarrow M_{S/p}
\]
such that for all $R$-algebras $C$, the projective system $\ubM (C) = (M_S (C))_{S \peq T}$ together with the maps $F_p (C)$ is an object of $\Rh\Fh_T$. In particular the relations $\pi \verk F_p = F_p \verk \pi$ and $F_p \verk F_l = F_l \verk F_p$ hold in the obvious sense, c.f. 2), 3) in section \ref{sec:1}. The ensuing category is denoted by $\Rh\Fh_{T/R}$. We may view its objects as the ``representable functors'' from the category of $R$-algebras to $\Rh\Fh_T$. We define another category $\Rh\Fh\Vh_{T/R}$ as follows. Objects are continuous projective systems $\ubM$ in $\Rh\Fh_{T/R}$ equipped with morphisms of the underlying additive group schemes $V_p : M_{S/p} \to M_S$ for all prime numbers $p \in S \peq T$ such that on $R$-algebras $C$ one gets objects of $\Rh\Fh\Vh_T$. Morphisms are the evident ones. In particular $\pi \verk V_p = V_p \verk \pi$ and $V_p \verk V_l = V_l \verk V_p$ and $F_p \verk V_p = p \, \id$, moreover $F_l \verk V_p = V_p \verk F_l$ for $p \neq l$ and the sequence
\begin{equation} \label{eq:7a}
0 \longrightarrow M_{S/p} \xrightarrow{V_p} M_S \xrightarrow{\pi} M_{S (p)} \longrightarrow 0
\end{equation}
is exact and $\pi$ is (scheme-theoretically) split. We may view the objects of $\Rh\Fh\Vh_{T/R}$ as the representable functors from the category of $R$-algebras to $\Rh\Fh\Vh_T$. 

Given any affine ring-scheme $M = \spec B$ over $R$, the functor 
\[
W^M_S = W_S \verk M : \{ R\text{-algebras} \} \longrightarrow \{ \text{rings} \}
\]
is an affine ring scheme over $R$ with underlying scheme $M^S = \spec B^{\otimes S}$. The Witt-vector Frobenius and Verschiebung maps for all prime numbers $p \in S$
\[
F_p : W_S (M (C)) \longrightarrow W_{S/p} (M (C)) \quad \text{and} \quad V_p : W_{S/p} (M (C)) \longrightarrow W_S (M (C))
\]
for the rings $M (C)$ where $C$ runs over $R$-algebras come by the Yoneda lemma from morphisms of ring schemes
\[
F_p : W^M_S \longrightarrow W^M_{S/p} \quad \text{and} \quad V_p : W^M_{S/p} \longrightarrow W^M_S \; .
\]
By the usual Witt-vector theory we see that equipped with the $F_p$'s (and $V_p$'s) the projective system of ring-schemes $\ubW^M := (W^M_S)_{S \peq T}$ becomes an object of $\Rh\Fh_{T/R}$ resp. $\Rh\Fh\Vh_{T/R}$.

We need a technical condition in the following:

\begin{defn}
An object $\ubM$ in $\Rh\Fh_{T/R}$ or $\Rh \Fh \Vh_{T/R}$ has no Hopf $T$-torsion if for $M = M_{\{ 1 \} }$ the abelian groups $M (R) , M (B_S)$ and $M (B_S \otimes_{\Z} B_S)$ have no $T$-torsion for all $S \peq T$ where $M_S = \spec B_S$. 
\end{defn}

\begin{theorem}
\label{t3}
a) Assume that $\ubM$ in $\Rh\Fh_{T/R}$ has no Hopf $T$-torsion. Then there is a unique morphism 
\[
\alpha = (\alpha_S)_{S \peq T} : \ubM \to \ubW^M
\]
in $\Rh\Fh_{T/R}$ with $\alpha_S = \id$ for $S = \{ 1 \}$. \\
b) Assume that $\ubM$ in $\Rh\Fh\Vh_{T/R}$ has no Hopf $T$-torsion. Then the above unique morphism $\alpha : \ubM \to \ubW^M$ in $\Rh\Fh_{T/R}$ defines an isomorphism in $\Rh\Fh\Vh_{T/R}$ (and hence in $\Rh\Fh_{T/R}$).
\end{theorem}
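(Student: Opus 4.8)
The plan is to reduce Theorem \ref{t3} to the ring-theoretic Propositions \ref{t1} and \ref{t2} by evaluating everything on the universal points of the schemes $M_S$; the no Hopf $T$-torsion hypothesis enters precisely to guarantee that the relevant ghost maps are injective where they are applied. Write $M_S = \spec B_S$ and $M = M_{\{1\}}$, and recall that $W^M_S = W_S\verk M$ is representable with $W^M_S(C) = W_S(M(C))$ and that the ghost map is a morphism of ring schemes $\log_S : W^M_S\to M^S$ (the target carrying the componentwise ring-scheme structure) which is injective on $C$-points whenever $M(C)$ has no $T$-torsion. For part a), apply Proposition \ref{t1} to the projective system $\ubM(B_S) = (M_{S'}(B_S))_{S'\peq T}$ in $\Rh\Fh_T$ — legitimate since its base ring $M(B_S)$ has no $T$-torsion — obtaining $\alpha^{B_S} : \ubM(B_S)\to\ubW(M(B_S))$, and let $\alpha_S : M_S\to W^M_S$ be the morphism of schemes corresponding under Yoneda to $\alpha^{B_S}_S(\id_{B_S})\in W_S(M(B_S)) = W^M_S(B_S)$. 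By the explicit formula in Proposition \ref{t1}, $\log_S\verk\alpha_S$ and the morphism $\beta_S := (\pi\verk F_n)_{n\in S} : M_S\to M^S$ agree on the universal point $\id_{B_S}$, hence $\log_S\verk\alpha_S = \beta_S$.

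Next I would verify that $\alpha = (\alpha_S)_{S\peq T}$ is a morphism in $\Rh\Fh_{T/R}$. Each identity — that $\alpha_S$ is a ring-scheme morphism, that $\pi\verk\alpha_{S_2} = \alpha_{S_1}\verk\pi$, and that $F_p\verk\alpha_S = \alpha_{S/p}\verk F_p$ — is proved by post-composing with the appropriate ghost map and invoking the standard Witt facts that $\log$ is a ring-scheme morphism compatible with the projections and with the Frobenii $(a_\nu)_{\nu\in S}\mapsto (a_{nd})_{d\in S/n}$ on the ghost side. This reduces each identity to the corresponding statement about the $\beta_S$, which holds because $\beta_S$ is a ring-scheme morphism (each $\pi\verk F_n$ is) and $\pi\verk\beta_{S_2} = \beta_{S_1}\verk\pi$, $F_p\verk\beta_S = \beta_{S/p}\verk F_p$ follow from the naturality and composition rules 2), 3) for the $F_n$. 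The resulting equalities of scheme morphisms are then checked on the universal points of $M_S$, of $M_S\times M_S$ and of $M_{S_2}$, where the ghost maps are injective because the values lie in $B_S$, in $B_S\otimes B_S$, resp. in $B_{S_2}$, and $M$ of each of these has no $T$-torsion. That $\alpha_{\{1\}} = \id$ is clear since $\log_{\{1\}} = \id = \beta_{\{1\}}$. For uniqueness, if $\alpha'$ is another such morphism then $(\log_S\verk\alpha'_S)$ is a family of ring-scheme morphisms $M_S\to M^S$ commuting with $\pi$ and the $F_p$ with $\{1\}$-component $\id$, so the uniqueness half of the first Claim in the proof of Proposition \ref{t1} (which uses no torsion hypothesis) forces $\log_S\verk\alpha'_S = \beta_S = \log_S\verk\alpha_S$, and injectivity of $\log_S$ on $B_S$-points gives $\alpha'_S = \alpha_S$.

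For part b), first check that $\alpha$ commutes with the Verschiebungs: post-composing $\alpha_S\verk V_p = V_p\verk\alpha_{S/p}$ with $\log_S$ and using that $V_p$ on the ghost side is $(c_\nu)_{\nu\in S/p}\mapsto(p\,\delta_{p\mid n}c_{n/p})_{n\in S}$, the claim reduces to $\beta_S\verk V_p = V_p\verk\beta_{S/p}$, which follows from $F_nV_p = 0$ for $p\nmid n$ and $F_nV_p = pF_{n/p}$ for $p\mid n$ (conditions 6), 7)) exactly as in the proof of Proposition \ref{t2}, and is verified on the universal point of $M_{S/p}$. It remains to show each $\alpha_S$ is an isomorphism of schemes. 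Combining the split exact sequences \eqref{eq:7a} for $\ubM$ and $\ubW^M$ with the commutativity of $\alpha$ with $\pi$ and $V_p$, one gets for every prime $p\in S$ a morphism of short exact sequences of additive group schemes whose sub-term map is $\alpha_{S/p}$ and whose quotient-term map is $\alpha_{S(p)}$; as $\pi$ is scheme-theoretically split, these sequences are exact on $C$-points for all $C$, so the five lemma shows $\alpha_S$ is an isomorphism on $C$-points once $\alpha_{S/p}$ and $\alpha_{S(p)}$ are. Since $|S/p| < |S|$ and $|S(p)| < |S|$ for every prime $p\in S$ when $S$ is finite and $S\neq\{1\}$, induction on $|S|$ with base case $\alpha_{\{1\}} = \id$ gives the result for all finite $S$, and the continuity of $\ubM$ and of $\ubW^M$ extends it to all $S\peq T$. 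Finally $(\alpha_S^{-1})_{S\peq T}$ is again a morphism in $\Rh\Fh\Vh_{T/R}$, so $\alpha$ is an isomorphism there and a fortiori in $\Rh\Fh_{T/R}$.

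There is no single hard step: the theorem is essentially a functor-of-points upgrade of Propositions \ref{t1} and \ref{t2}, and the work is to organize the reduction so that every scheme-theoretic assertion becomes, after post-composition with a ghost map, a combinatorial identity about the morphisms $\beta_S = (\pi\verk F_n)_{n\in S}$ — exactly the identities already proved on the level of rings in those propositions. The one thing to watch is that each universal point invoked has its coordinates in one of the rings $B_{S'}$ or $B_{S'}\otimes B_{S'}$ with $S'\peq T$, so that the no Hopf $T$-torsion hypothesis makes the relevant ghost map injective; granted that, the five-lemma induction in b) is routine.
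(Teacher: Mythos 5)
Your proof is correct and follows the same basic strategy as the paper: reduce to Propositions \ref{t1} and \ref{t2} by evaluating on $R$-algebras $C$ with $M(C)$ torsion-free, then upgrade to a scheme-level statement. The difference is purely organizational. The paper packages the upgrade in the abstract Yoneda-type Lemma \ref{t4} (applied with $F=M_S$, $G=W^M_S$, and the class $\Ch$), which shows directly from the functoriality of the $\alpha(C)_S$ that they come from a ring-scheme morphism $\alpha_S$; you instead re-derive the needed Yoneda facts in the concrete situation by post-composing with the ghost morphism $\log_S$, reducing everything to the transparent morphism $\beta_S=(\pi\verk F_n)_{n\in S}$, and checking equalities at the universal points of $M_S$, $M_S\times M_S$, $M_{S_2}$, $M_{S/p}$ where injectivity of the ghost map is exactly what the no Hopf $T$-torsion hypothesis supplies. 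Your version has the advantage of being self-contained (it does not need Lemma \ref{t4} as a separate statement) and of making explicit why each tensor power of $B_S$ must be controlled. You also spell out the five-lemma induction on $|S|$ (plus continuity for infinite $S$) that shows $\alpha_S$ is an isomorphism of schemes in part b); the paper's proof of Theorem \ref{t3} leaves this step implicit, relying on the reader to transport the inductive argument from the proof of Proposition \ref{t2} to the scheme level via the scheme-theoretically split sequences \eqref{eq:7a}. So: same route, with your writeup slightly more concrete and more complete on the isomorphism claim.
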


\begin{proof}
For the class $\Ch$ of $R$-algebras $C$ with $M (C)$ $T$-torsion free, Propositions \ref{t1} and \ref{t2} ensure the existence of unique morphisms in $\Rh\Fh_T$ (resp. isomorphisms in $\Rh\Fh\Vh_T$)
\[
\alpha (C) = (\alpha (C)_S)_{S \peq T} : \ubM (C) \longrightarrow \ubW^M (C) = \ubW (M (C)) \; .
\]
Moreover they are functorial in $C \in \Ch$. We only need to show that the functorial ring homomorphisms $\alpha (C)_S : M_S (C) \to W^M_S (C)$ come from uniquely determined morphisms of ring-schemes $\alpha_S : M_S \to W^M_S$ which commute with the maps $\pi , F_p , V_p$'s. Since $\ubM$ has no Hopf $T$-torsion, the existence of $\alpha_S$ follows from the next version of the Yoneda lemma applied to $F = M_S , G = W^M_S$ and the class $\Ch$ above. Another application of (the first part of) the lemma shows that the $\alpha_S$'s commute with $\pi , F_p , V_p$.
\end{proof}

\begin{lemma}
\label{t4}
Let $F = \spec A$ and $G = \spec B$ be two affine ring-schemes over $R$. Assume that for a class $\Ch$ of $R$-algebras there are functorial ring homomorphisms for all $C \in \Ch$,
\[
\alpha (C) : F (C) \longrightarrow G (C) \; .
\]
If $A \in \Ch$, then there is a unique morphism of $R$-schemes $\alpha : F \to G$ which induces $\alpha (C)$ for all $C \in \Ch$. If in addition $R \in \Ch$ and $A \otimes_R A \in \Ch$, then $\alpha$ is a morphism of ring-schemes.
\end{lemma}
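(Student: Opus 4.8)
The plan is the standard Yoneda argument, carried through with the tautological points of the representing algebras $A$ and $A\otimes_R A$.

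\textbf{Construction and uniqueness.} Since $F=\spec A$ and $G=\spec B$ we have $F(C)=\Hom_{R\text{-alg}}(A,C)$ and $G(C)=\Hom_{R\text{-alg}}(B,C)$ naturally in the $R$-algebra $C$, and morphisms of $R$-schemes $F\to G$ correspond to $R$-algebra maps $B\to A$. As $A\in\Ch$ we may set $u:=\alpha(A)(\id_A)\in G(A)=\Hom_{R\text{-alg}}(B,A)$ and put $\alpha:=\spec u:F\to G$. For $C\in\Ch$ and $x\in F(C)$, viewed as an $R$-algebra map $x:A\to C$, naturality of the family $(\alpha(C))_{C\in\Ch}$ along $x$ --- legitimate because $A,C\in\Ch$ --- evaluated on $\id_A$ gives
\[
\alpha(C)(x)=\alpha(C)(x\verk\id_A)=x\verk\alpha(A)(\id_A)=x\verk u ,
\]
which is exactly the map on $C$-points induced by $\spec u$. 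Hence $\alpha$ induces $\alpha(C)$ for all $C\in\Ch$. Uniqueness is immediate: a morphism of $R$-schemes $F\to G$ is determined by the image of the tautological point $\id_A\in F(A)$, and any $\alpha'$ inducing $\alpha(A)$ must send $\id_A$ to $u$. This part uses only $A\in\Ch$.

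\textbf{Ring-scheme compatibility.} Write $\Delta^F_+,\Delta^F_{\hullet}:A\to A\otimes_R A$ and $\Delta^G_+,\Delta^G_{\hullet}:B\to B\otimes_R B$ for the co-addition and co-multiplication, and $\varepsilon^0_F:A\to R$, $\varepsilon^0_G:B\to R$ for the co-units of the zero sections. That $\alpha=\spec u$ is a morphism of ring-schemes amounts to $(u\otimes u)\verk\Delta^G_*=\Delta^F_*\verk u$ for $*\in\{+,\hullet\}$ together with $\varepsilon^0_F\verk u=\varepsilon^0_G$. For the first two identities, observe that the coprojections $i_1,i_2:A\to A\otimes_R A$, $i_1(a)=a\otimes1$, $i_2(a)=1\otimes a$, are the tautological pair of $(A\otimes_R A)$-points of $F$, and that by the definition of the ring structure on $F(A\otimes_R A)$ one has $\Delta^F_+=i_1+i_2$ and $\Delta^F_{\hullet}=i_1\cdot i_2$ there. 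Since $A\otimes_R A\in\Ch$, the map $\alpha(A\otimes_R A)$ is a \emph{ring} homomorphism; apply it to these two identities and use naturality along $i_1$, $i_2$ and along $\Delta^F_*$ (all maps between members of $\Ch$) to rewrite $\alpha(A\otimes_R A)(i_j)=i_j\verk u$ and $\alpha(A\otimes_R A)(\Delta^F_*)=\Delta^F_*\verk u$. One is then left with the identity $(i_1\verk u)*(i_2\verk u)=(u\otimes u)\verk\Delta^G_*$ in $G(A\otimes_R A)$, which holds because $*$ on $\Hom_{R\text{-alg}}(B,D)$ is given by $(f,g)\mapsto\mu_D\verk(f\otimes g)\verk\Delta^G_*$ with $\mu_D$ the product of $D$. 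This gives the two Hopf-compatibilities. The zero-section identity uses $R\in\Ch$: the point $0\in F(R)$ is the $R$-algebra map $\varepsilon^0_F$, naturality along it gives $\alpha(R)(\varepsilon^0_F)=\varepsilon^0_F\verk u$, while $\alpha(R)(0)=0=\varepsilon^0_G$ because $\alpha(R)$ is additive; hence $\varepsilon^0_F\verk u=\varepsilon^0_G$. (In the unital case the same argument with the multiplicative unit section shows $\alpha$ preserves units; compatibility with additive inversion follows similarly, or already from the additivity of $\alpha$.)

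\textbf{Main difficulty.} There is no genuine obstacle: the argument is purely formal. The only things that require attention are the bookkeeping of variances --- making sure $\id_A\in F(A)$, respectively the pair $(i_1,i_2)$, is inserted on the correct side of each naturality square --- and the routine unravelling of how the ring operations on a Hom-set $\Hom_{R\text{-alg}}(B,D)$ are encoded by the comultiplications of $G$; it is precisely this encoding that turns ``$\alpha(A\otimes_R A)$ is a ring homomorphism'' into the Hopf-algebra compatibility of $u$.
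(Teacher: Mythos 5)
Your proof is correct and follows essentially the same Yoneda argument as the paper: define $u=\alpha(A)(\id_A)$, show $\alpha(C)(\psi)=\psi\circ u$ by naturality, and extract the Hopf-algebra compatibilities by evaluating at the tautological points of $A\otimes_R A$ (and of $R$ for the zero section). The only cosmetic difference is that the paper expresses additivity/multiplicativity of $\alpha(C)$ directly as the commuting square $(\alpha^{\sharp})^*\circ\Delta_A^*=\Delta_B^*\circ(\alpha^{\sharp}\otimes\alpha^{\sharp})^*$ on $\Hom(A\otimes A,C)$ and then evaluates at $\id_{A\otimes A}$, whereas you first rewrite $\Delta^F_*$ as $i_1 * i_2$ and push it through $\alpha(A\otimes_R A)$ --- two readings of the same naturality square.
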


\begin{proof}
The morphism $\spec \alpha^{\sharp} : F \to G$ induced by
\[
\alpha^{\sharp} = \alpha (A) (\id_A) \in \Hom_{R\text{-alg}} (B,A)
\]
induces the given maps $\alpha (C)$, since for $\psi \in F (C) = \Hom_{R\text{-alg}} (A,C)$ we have
\[
\alpha (C) (\psi) = \psi \verk \alpha^{\sharp} = (\spec \alpha^{\sharp}) (\psi) \; .
\]
On the other hand, this equation applied to $C = A , \psi = \id$ implies that $\alpha^{\sharp} = \alpha (A) (\id)$ is unique, hence $\alpha := \spec \alpha^{\sharp}$ is unique as well. Since $\alpha (C)$ is an additive (multiplicative) map for $C \in \Ch$ and since $\alpha (C) = (\alpha^{\sharp})^* , \alpha (C) \times \alpha (C) = (\alpha^{\sharp} \otimes \alpha^{\sharp})^*$, we have
\[
(\alpha^{\sharp})^* \verk \Delta^* = \Delta^* \verk (\alpha^{\sharp} \otimes \alpha^{\sharp})^*
\]
on $F (C) \times F (C) = \Hom_{R\text{-alg}} (A \otimes A, C)$ where $\Delta$ is the co-addition (co-multi\-pli\-ca\-tion) on $A$ resp. $B$. If $A \otimes A \in \Ch$, we can apply this to $C = A \otimes A$ and $\id \in \Hom (A \otimes A , A \otimes A)$ and obtain
\[
\Delta \verk \alpha^{\sharp} = (\alpha^{\sharp} \otimes \alpha^{\sharp}) \verk \Delta \; .
\]
If $R \in \Ch$, then $\alpha (R) = (\alpha^{\sharp})^*$ and from $\alpha (R) (0) = 0$ we get $(\alpha^{\sharp})^* (\varepsilon_A) = \varepsilon_B$ i.e. $\varepsilon_B = \varepsilon_A \verk \alpha^{\sharp}$ for the co-zeroes (or co-units) $\varepsilon_A$ and $\varepsilon_B$ of $A$ and $B$. 
\end{proof}

We end this section with an application of Theorem \ref{t3}. Two divisor stable subsets $T_1$ and $T_2$ of $\N$ are coprime if and only if $T_1 \cap T_2 = \{ 1 \}$. In this case the set $T_1 \cdot T_2 = \{ nm \mid n \in T_1 , m \in T_2 \}$ is again divisor stable. Lenstra conjectured and in \cite{A} Auer proved that there are natural isomorphisms of functors on unital rings
\begin{equation}
\label{eq:8a}
W_{T_1 \cdot T_2} \silo W_{T_1} \verk W_{T_2} \; .
\end{equation}
Let us explain how this follows from Theorem \ref{t3}. For $S \peq T_1$ set
\[
M_S = W_{S \cdot T_2} \; .
\]
For $n \in S$ we have $(S \cdot T_2) / n = (S/n) \cdot T_2$. Hence the usual Witt vector Frobenius and Verschiebung morphisms give morphisms
\[
F_n : M_S \longrightarrow M_{S/n} \quad \text{and} \quad V_n : M_{S/n} \longrightarrow M_S \; .
\]
Using them we obtain an object $\ubM = (M_S)_{S \peq T_1}$ of $\Rh\Fh\Vh_{T_1/\Z}$ with $M := M_{ \{ 1 \} } = W_{T_2}$. The coordinate rings $B_S$ of all $M_S$ are polynomial algebras over $\Z$ and so are $B_S \otimes_{\Z} B_S$. The ring $W_{T_2} (A)$ is a subring of $A^{T_2}$ via the ghost map if $A$ has no $T_2$-torsion. Therefore if $A$ has no $\Z$-torsion it follows that $\ubM$ has no Hopf $T_1$-torsion. Hence we can apply Theorem \ref{t3} b) and obtain a uniquely determined isomorphism
\[
\alpha : \ubM = (W_{S \cdot T_2})_{S \peq T_1} \longrightarrow \ubW^M = (W_S \verk W_{T_2})_{S \peq T_1}
\]
in $\Rh \Fh \Vh_{T/\Z}$ with $\alpha_S = \id$ for $S = \{ 1 \}$. In particular we get a natural isomorphism
\begin{equation}
\label{eq:8b}
\alpha_{T_1} : W_{T_1 \cdot T_2} \silo W_{T_1} \verk W_{T_2} \; .
\end{equation}
The Witt vector functor $W_S$ transforms short exact sequences
\[
0 \longrightarrow I \longrightarrow A \longrightarrow B \longrightarrow 0 
\]
of rings into short exact sequences of rings
\[
0 \longrightarrow W_S (I) \longrightarrow W_S (A) \longrightarrow W_S (B) \longrightarrow 0 \; .
\]
Hence the isomorphism \eqref{eq:8b} is also valid on non-unital rings (they are kernels of surjections of unital rings). Applying \eqref{eq:8b} to $A^{(q)}$ for $\Z [q]$-algebras $A$ we obtain an isomorphism of ring-schemes over $\spec \Z [q]$
\[
W^{(q)}_{T_1 \cdot T_2} \silo W_{T_1} \verk W^{(q)}_{T_2} \; .
\]
Note that for $T_2 = \{ 1 \}$ this holds by definition. 

\section{Deformations of Witt vector schemes} \label{sec:5}
In this section we prove universality over reduced bases for the $q$-deformation $W^{(q)}$ introduced in section \ref{sec:2neu} and show that $W^{(q)}$ is isomorphic to $\oW^{1-q}$. 

For an $R$-algebra $C$ and an element $r \in R$ we denote by $C^{(r)}$ the ring with underlying additive group $(C , +)$ and twisted multiplication $x * y = r xy$. The ring $C^{(r)}$ is unital if and only if $r \in R^{\times}$. In this case $1_{C^{(r)}} = r^{-1}$ is the unity. Let $\G^{(r)}_{R}$ be the ring-scheme over $R$ defined by $\G^{(r)}_R (C) = C^{(r)}$ for all $R$-algebras $C$. It is represented by the polynomial algebra $R [t]$ together with 
\[
\Delta_+ (t) = t \otimes 1 + 1 \otimes t \quad \text{and} \quad \Delta_{\hullet} (t) = rt \otimes t \;  . 
\]
Moreover $\varepsilon_0 (t) = 0$ and $\G^{(r)}_R$ is unital if and only if $r \in R^{\times}$. In this case the co-unit $\varepsilon_1$ is given by $\varepsilon_1 (t) = r^{-1}$. 

In particular we have a non-unital ring-scheme $\G^{(q)}$ over $\Z [q]$ and a unital ring-scheme $\G^{(q)}$ over $\Z [q , q^{-1}]$. An element $r \in R$ corresponds to a ring homomorphism $\Z [q] \to R$ and we have
\begin{equation}
\label{eq:6}
\G^{(r)}_{R} = \G^{(q)} \otimes_{\Z [q]} R
\end{equation}
as non-unital ring-schemes over $R$. Similarly a unit $r \in R^{\times}$ determines a ring homomorphism $\Z [q , q^{-1}] \to R$ and
\[
\G^{(r)}_{R} = \G^{(q)} \otimes_{\Z [q, q^{-1}]} R
\]
as unital ring-schemes. For any two units $r , r' \in R^{\times}$ the ring-schemes $\G^{(r)}_R$ and $\G^{(r')}_R$ are isomorphic in the category of unital ring-schemes. Now let $r , r' \in R$ be arbitrary. In the category of non-unital ring-schemes $\G^{(r)}_R$ and $\G^{(r')}_R$ are isomorphic if and only if $r' = ru$ for some $u \in R^{\times}$. There is a natural bijection
\begin{equation}
\label{eq:7}
\{ u \in R^{\times} \mid r = r'u \} \silo \Iso (\G^{(r)}_R , \G^{(r')}_R)
\end{equation}
and in particular an isomorphism of groups
\[
\{ u \in R^{\times} \mid r = ru \} \silo \Aut (\G^{(r)}_R) \; .
\]
Here $\Iso$ and $\Aut$ are taken in the category of non-unital ring-schemes over $R$. If $r \in R$ is not a zero-divisor then $\Aut (\G^{(r)}_R) = \{ \id \}$. 

We denote by $(\G^{(r)}_R , t)$ the pair consisting of $\G^{(r)}_R$ and the chosen local coordinate $t$ above. Consider two ring-schemes $M_1$ and $M_2$ over $R$ whose underlying schemes are isomorphic to $\A^1$ and which are enhanced by coordinates $z_1$ and $z_2$ in zero. We say that $(M_1 , z_1)$ and $(M_2, z_2)$ are isomorphic if there is an isomorphism (necessarily unique) of ring-schemes $\varphi : M_1 \to M_2$ with $z_1 = z_2 \verk \varphi$. Clearly $(\G^{(r)}_R , t) \cong (\G^{(r')}_R , t)$ if and only if $r = r'$. 

\begin{prop}
\label{t5}
Let $R$ be a reduced ring and let $M$ be a ring-scheme over $R$ with underlying scheme isomorphic to $\A^1$ and coordinate $z$ in zero. Then there exists a unique element $r \in R$ such that $(M, z) \cong (\G^{(r)}_R , t)$. If $M$ is unital and we work in the category of unital ring-schemes, then $r \in R^{\times}$. 
\end{prop}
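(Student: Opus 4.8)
The plan is as follows. Write $M = \Spec R[z]$ and set $\Sigma := \Delta_+(z)$, $\Pi := \Delta_{\hullet}(z) \in R[z]\otimes_R R[z] = R[x,y]$, with co-zero $\varepsilon_0(z) = 0$. Since any isomorphism $(M,z)\silo(\G^{(r)}_R,t)$ must carry $z$ to $t$ and hence be the tautological identification $z \leftrightarrow t$, the assertion is equivalent to the two polynomial identities $\Sigma = x+y$ and $\Pi = r\cdot xy$ in $R[x,y]$, in which case $r$ is necessarily the coefficient of $xy$ in $\Pi$; this also yields uniqueness of $r$ (alternatively one invokes the already-noted fact that $(\G^{(r)}_R,t)\cong(\G^{(r')}_R,t)$ iff $r = r'$). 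First I would reduce to the case where $R = k$ is an algebraically closed field: since $R$ is reduced, $R$ injects into $\prod_{\ep}\kappa_{\ep}$, where $\ep$ ranges over the primes of $R$ and $\kappa_{\ep}$ is an algebraic closure of the residue field at $\ep$; hence $R[x,y]$ injects into $\prod_{\ep}\kappa_{\ep}[x,y]$, and it suffices to prove $\Sigma = x+y$ and $\Pi = r\cdot xy$ after base change to each $\kappa_{\ep}$.

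So assume $R = k$ algebraically closed. For the additive part, the underlying additive group scheme of $M$ is a smooth connected commutative affine algebraic group of dimension one over $k$; by the classification of one-dimensional connected affine algebraic groups it is isomorphic, as a group scheme, to $\G_a$ or to $\G_m$, and since the underlying scheme of $\G_m$ is not $\A^1$ it must be $\G_{a,k}$. Now every automorphism of the scheme $\A^1_k$ preserving the zero section has the form $z \mapsto az$ with $a \in k^{\times}$, and the co-addition of $\G_a$ is linear in the coordinate; writing a group-scheme isomorphism $M \silo \G_{a,k}$ in terms of $z$ and using that it is additive, one reads off $a\,\Sigma = a\,(x+y)$, hence $\Sigma = x+y$.

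For the multiplicative part, $\Pi$ is now bi-additive, commutative and associative with $\Pi(x,0) = 0$. In characteristic zero a bi-additive polynomial is bilinear, so $\Pi = r\,xy$. In characteristic $p>0$ write $\Pi = \sum_{i,j\ge 0} d_{ij}\,x^{p^i}y^{p^j}$ with $d_{ij}=d_{ji}$, put $\phi_j(X) := \sum_i d_{ij}X^{p^i}$, and let $J$ be maximal with $\phi_J \ne 0$. For $a \in k$, left multiplication $\Pi(a,-)$ is the additive endomorphism $\sum_{j=0}^{J}\phi_j(a)F^j$ of $\G_{a,k}$, viewed in the twisted endomorphism ring $k\{F\}$ (with $Fc = c^pF$); associativity and commutativity of $\Pi$ give $\Pi(a,-)\verk\Pi(b,-) = \Pi(\Pi(a,b),-) = \Pi(b,-)\verk\Pi(a,-)$, and comparing the coefficients of $F^{2J}$ yields $\phi_J(a)\,\phi_J(b)^{p^J} = \phi_J(b)\,\phi_J(a)^{p^J}$ for all $a,b\in k$. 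If $J\ge 1$, fixing $b$ with $\phi_J(b)\ne 0$ shows that $a\mapsto \phi_J(a)^{p^J-1}$ is constant on the cofinite (hence infinite) set $\{a : \phi_J(a)\ne 0\}$; but $\phi_J$ is a non-zero additive polynomial, so $\phi_J^{p^J-1}$ is a non-constant polynomial, a contradiction. Hence $J = 0$, i.e. $d_{ij}=0$ unless $i=j=0$, so $\Pi = d_{00}\,xy = r\,xy$. Descending back to $R$ gives the existence of $r$.

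Finally, in the unital case let $s := \varepsilon_1(z)\in R$ be the value of the unit section; the unit axiom reads $\Pi(s,y) = y$, and since $\Pi = r\,xy$ this forces $rs = 1$, so $r\in R^{\times}$, and the identification $z\leftrightarrow t$ matches the two units (both equal $r^{-1}$), so the isomorphism is one of unital ring-schemes. The main obstacle is the positive-characteristic multiplicative step: the additive part is essentially a citation and the rest is formal, whereas ruling out the Frobenius-twisted terms of $\Pi$ is where the real content lies — the comparison of top Frobenius coefficients is the trick I would rely on.
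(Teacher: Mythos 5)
Your proof is correct, but it takes a genuinely different route from the paper. The paper reduces Proposition \ref{t5} to Lemma \ref{t6}, which is a direct polynomial computation over $R$ itself: comparing degrees in $x$ on the two sides of the associativity relation $F(x,F(y,z)) = F(F(x,y),z)$ and using that $R$ has no nilpotents forces $\deg_x F = 1$, hence $F = x+y+cxy$; the existence of the additive inverse polynomial then forces $c = 0$, again by reducedness, and an analogous argument gives $G = rxy$. You instead use reducedness only to embed $R$ into a product of algebraically closed fields $\kappa_{\ep}$ and then argue over each $\kappa_{\ep}$: the classification of one-dimensional connected affine algebraic groups identifies the additive group scheme with $\G_{a,\kappa_{\ep}}$ (yielding $\Sigma = x+y$), and a commutation argument in the twisted polynomial ring $k\{F\}$ of additive endomorphisms of $\G_{a,k}$ kills the Frobenius-twisted terms of $\Pi$ (yielding $\Pi = rxy$). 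The trade-off is clear: the paper's argument is elementary and stays entirely over $R$, at the cost of a somewhat delicate degree-and-leading-coefficient comparison over a reduced ring with possible zero divisors; your argument invokes heavier standard machinery (the algebraic-group classification and the theory of additive $p$-polynomials) but, after passing to a field, the nonvanishing statements needed become transparent. Both approaches establish uniqueness of $r$ and handle the unital case the same way, by reading off that the co-unit forces $r$ to be invertible.
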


\begin{proof} We have $M = \spec R [z]$. The ring-scheme structure of $M$ corresponds to co-addition $\Delta_+$, co-multiplication $\Delta_{\hullet}$, co-zero $\varepsilon_0$ and in the unital case a co-unit $\varepsilon_1$. By the choice of $z$ we have $\varepsilon_0 (z) = 0$. Set $F (X,Y) = \Delta_+ (z)$ and $G (X,Y) = \Delta_{\hullet} (z)$ where we set $X = z \otimes 1$ and $Y = 1 \otimes z$. Then $F,G$ determine a one-dimensional polynomial ring law over $R$. Now the assertion follows from the next lemma.
\end{proof}

\begin{lemma}
\label{t6}
Let $R$ be a reduced ring and $F,G \in R [X,Y]$ a one-dimensional polynomial ring law over $R$. Then there is a unique element $r \in R$ such that
\[
F (X,Y) = X + Y \quad \text{and} \quad G (X,Y) = rXY \; .
\]
\end{lemma}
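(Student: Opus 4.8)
The plan is to reduce to the case where $R = K$ is a field and then to pin down $F$ and $G$ separately from the ring-scheme axioms. Recall that $F = \Delta_+(z)$ and $G = \Delta_{\hullet}(z)$ are the co-addition and co-multiplication of a (non-unital) ring-scheme structure on $\A^1_R = \Spec R[z]$ with co-zero section $z = 0$: thus $(\A^1_R, F)$ is a commutative group scheme with an antipode $\iota \in R[X]$ satisfying $F(X,\iota(X)) = 0$ and $\iota(0) = 0$; $G$ is commutative and associative; distributivity $G(X, F(Y,Z)) = F(G(X,Y), G(X,Z))$ holds; and $0$ is absorbing for $G$. The zero-section axioms give $F(X,0) = X$, $F(0,Y) = Y$ and $G(X,0) = G(0,Y) = 0$, so $F - X - Y$ and $G$ both lie in $XY\cdot R[X,Y]$. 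Since $R$ is reduced, the canonical map $R \to \prod_{\mathfrak{p}} \mathrm{Frac}(R/\mathfrak{p})$ (product over the primes of $R$) is injective, and base change to each field $K := \mathrm{Frac}(R/\mathfrak{p})$ preserves all of the above. If I show that over every such $K$ one has $F_K = X + Y$ and $G_K = r_K XY$ for some $r_K \in K$, then comparing coefficients shows that in $R[X,Y]$ every coefficient of $F - X - Y$, and every coefficient of $G$ other than the coefficient $r$ of $XY$, maps to $0$ in all $K$ and hence vanishes; so $F = X+Y$ and $G = rXY$, with $r$ unique as a coefficient of $G$. It therefore suffices to treat $R = K$ a field.

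Next I would show $F = X + Y$ over $K$. For any $K$-algebra $C$ and $b \in C$, the translation $a \mapsto F(a,b)$ is bijective on $C$ with inverse $a \mapsto F(a, \iota(b))$ (by associativity and the antipode axiom). Taking $C = K[b]$ with $b$ an indeterminate, $X \mapsto F(X, b)$ is an automorphism of the polynomial ring $K[b][X]$ over the domain $K[b]$, hence affine-linear in $X$; therefore $\deg_X F \le 1$, and symmetrically $\deg_Y F \le 1$. Combined with $F(X,0)=X$ and $F(0,Y)=Y$ this forces $F = \alpha XY + X + Y$ for some $\alpha \in K$. The antipode $\iota$ is itself a scheme automorphism of $\A^1_K$ fixing $0$, so $\iota(X) = uX$; the linear terms of $F(X, \iota(X)) = 0$ give $u = -1$, and then $0 = F(X, -X) = -\alpha X^2$ gives $\alpha = 0$. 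Hence $F = X + Y$.

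Finally, for $G = rXY$ over $K$: with $F = X+Y$, distributivity reads $G(X, Y+Z) = G(X,Y) + G(X,Z)$, so $G$ is an additive polynomial in each variable (using commutativity). If $\car K = 0$, an additive polynomial is linear, so $G(X,Y) = c(X)Y$, and $c(X)Y = c(Y)X$ forces $c(X) = c(1)X$, i.e. $G = rXY$ with $r = c(1)$. If $\car K = p$, an additive polynomial over $K$ is a $K$-linear combination of the $Y^{p^k}$, so $G = \sum_{l,k} b_{lk} X^{p^l} Y^{p^k}$ with $b_{lk} = b_{kl}$. Assuming $G \neq 0$, set $K_0 = \max\{k : b_{lk} \neq 0 \text{ for some } l\}$ and $P(T) = \sum_l b_{l,K_0} T^{p^l} \neq 0$. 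In the associativity identity $G(G(X,Y),Z) = G(X,G(Y,Z))$ the left side has $Z$-degree $p^{K_0}$ with leading $Z$-coefficient $P(G(X,Y))$, while the right side has $Z$-degree $p^{2K_0}$ with leading $Z$-coefficient $P(X)\,P(Y)^{p^{K_0}}$; both are nonzero over the domain $K[X,Y]$, so $p^{K_0} = p^{2K_0}$ and $K_0 = 0$. With $b_{lk} = b_{kl}$ this forces all $b_{lk} = 0$ except $b_{00}$, so $G = rXY$ with $r = b_{00}$.

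The only genuine obstacle is the characteristic-$p$ case: the ring-scheme need not be unital, so one cannot invoke a multiplicative unit, and additive polynomials acquire Frobenius-type terms $X^{p^k}$; it is the associativity/degree computation above that eliminates them. The step $F = X+Y$ also needs a little care, the crucial point being that translations — and the antipode — are automorphisms of the affine line and hence affine-linear.
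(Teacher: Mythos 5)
Your proof is correct, but it takes a genuinely different route from the paper. The paper works directly over the reduced ring $R$: the associativity $F(x,F(y,z))=F(F(x,y),z)$ forces $\deg_x F = (\deg_x F)^2$ by comparing leading coefficients and using that $R$ has no nilpotents, hence $\deg_x F = \deg_y F = 1$ and $F = x+y+cxy$; the antipode relation $F(x,I(x))=0$ then forces $c^m=0$ for some $m$, hence $c=0$ by reducedness; and the same degree argument from associativity handles $G$ (the paper only says ``similarly''). You instead reduce to fields via the injection $R\hookrightarrow\prod_{\mathfrak p}\mathrm{Frac}(R/\mathfrak p)$, and over a field $K$ you argue geometrically: translations by the group law are $K[b]$-algebra automorphisms of $K[b][X]$, hence affine-linear, which bounds the degree of $F$; then the antipode (as a scheme automorphism of $\A^1_K$ fixing $0$) kills the cross term. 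For $G$ you classify additive polynomials over $K$, which in characteristic $p$ introduces Frobenius terms $X^{p^l}Y^{p^k}$, and you eliminate these by a $Z$-degree comparison in the associativity of $G$.

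Both approaches are correct. Your field reduction is clean and makes the deduction $\deg_X F\le 1$ conceptually transparent, but it buys this at the cost of a longer analysis of $G$: over a field, distributivity alone leaves the Frobenius terms alive, and you genuinely need associativity to remove them. The paper's argument for $G$ is shorter precisely because it applies the same associativity-degree argument used for $F$ directly to $G$ over $R$, avoiding any discussion of additive polynomials. (As a small remark: your automorphism argument for $F$ actually gives more than $\deg_X F\le 1$ — the leading $X$-coefficient of $F(X,b)$ must be a unit of $K[b]$, i.e.\ a constant, which already yields $F=X+Y$ without the antipode step; you use a slightly weaker conclusion and compensate.)
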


\begin{proof}
Let $n \ge 1$ be the highest power of $x$ in $F (x,y)$. An inspection of the formula
\[
F (x , F (y,z)) = F (F (x,y) , z)
\]
shows that $n = n^2$ since $R$ has no nilpotent elements except $0$. Hence $n = 1$ and similarly for $y$. Thus
\[
F (x,y) = x+y + cxy \quad \text{for some} \; c \in R \; .
\]
By assumption there is a polynomial $I (x) = -x + \deg \ge 2$ such that $F (x, I (x)) = 0$. Since $R$ is reduced, it follows that $c = 0$. Hence $F (x,y) = x+y$. Associativity of $G$ implies similarly as for $F$ that $G (x,y) = rxy$ for some $r \in R$. \quad
\end{proof}

\begin{rem}
Over non-reduced bases there are more one-dimensional polynomial ring laws than the ones in the lemma. Over $R = \Z [\varepsilon] / (\varepsilon^2)$ for example the polynomials $F (X,Y) = X + Y + \varepsilon XY$ and $G (X,Y) = r \varepsilon XY$ for $r \in R$ define a different class.
\end{rem}

Since we are interested in deformations of Witt vector schemes it is natural to introduce the full subcategory $\Ah \Fh \Vh_{T / R}$ of $\Rh \Fh \Vh_{T/R}$ whose objects $\ubM$ satisfy $M_{ \{ 1 \} } \cong \A^1$ as schemes. We rigidify $\Ah \Fh \Vh_{T/R}$ by considering the category $\Ah \Fh \Vh^+_{T/R}$ of pairs $(\ubM , z)$ where $z$ is a coordinate in zero of $M_{ \{ 1 \} }$. A morphism $\varphi : (\ubM , z) \to (\ubM' , z')$ is a morphism $\varphi : \ubM \to \ubM'$ in $\Ah \Fh \Vh^+_{T/R}$ such that $z = z' \verk \varphi_{\{ 1 \} }$. In particular $\varphi_{ \{ 1 \} } : M_{ \{ 1 \} } \to M'_{ \{ 1 \} }$ is an isomorphism.

For $S \peq T$ recall the non-unital ring scheme $W^{(q)}_S$ over $\Z [q]$ from section \ref{sec:2neu}. We have $W^{(q)}_S = W^{\G^{(q)}}_S$ and $W^{(q)}_{\{ 1 \} } = \A^1$ is enhanced by the fixed coordinate $t$. Thus we may view $\ubW^{(q)} := \ubW^{\G^{(q)}}$ as objects of $\Ah \Fh \Vh_{T/ \Z [q]}$ and $\Ah \Fh \Vh^+_{T / \Z [q]}$. Similar definitions over the ring $\Z [q , q^{-1}]$ hold in the unital case.

\begin{cor}
\label{t7neu}
Let $R$ be a reduced ring without $T$-torsion and $\ubM$ an object of $\Ah \Fh \Vh^+_{T/R}$. In the non-unital case there is a unique homomorphism $\Z [q] \to R$ such that there exists a morphism in $\Ah \Fh \Vh^+_{T/R}$
\[
\alpha : \ubM \longrightarrow \ubW^{(q)} \otimes_{\Z [q]} R \; .
\]
The morphism $\alpha$ is uniquely determined and it is an isomorphism. In the unital case the corresponding assertions hold with $\Z [q]$ replaced by $\Z [q , q^{-1}]$. 
\end{cor}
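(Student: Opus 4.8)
The plan is to combine Proposition~\ref{t5} with Theorem~\ref{t3}(b), which between them contain all the substance; what is left is essentially bookkeeping.

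First I would apply Proposition~\ref{t5} to the ring-scheme $M_{\{1\}}$ over the reduced ring $R$: since its underlying scheme is isomorphic to $\A^1$ and it carries the coordinate $z$ in zero, there is a unique $r\in R$ (a unique $r\in R^{\times}$ in the unital case) together with the unique isomorphism of ring-schemes $\psi:M_{\{1\}}\silo\G^{(r)}_R$ satisfying $z=t\verk\psi$. The element $r$ is the same datum as a ring homomorphism $\Z[q]\to R$ (resp. $\Z[q,q^{-1}]\to R$), and by \eqref{eq:6} we have $\G^{(r)}_R=\G^{(q)}\otimes_{\Z[q]}R$. Because base change commutes with the formation of $\ubW^{(-)}$ and with all the maps $\pi,F_p,V_p$, this identifies $\ubW^{(q)}\otimes_{\Z[q]}R$ with $\ubW^{\G^{(r)}_R}$ as objects of $\Ah\Fh\Vh^+_{T/R}$, the distinguished coordinate being $t$ on each side. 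So it suffices to produce the (unique) isomorphism $\ubM\silo\ubW^{\G^{(r)}_R}$ in $\Ah\Fh\Vh^+_{T/R}$ and to show that the homomorphism $q\mapsto r$ is the only one that works.

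The one real point is to verify that $\ubM$ has no Hopf $T$-torsion, so that Theorem~\ref{t3}(b) applies. Using the split exact sequences \eqref{eq:7a} and induction on $|S|$ one sees that for finite $S$ the scheme $M_S$ is isomorphic to $\A^{S/p}\times_R\A^{S(p)}$, hence to $\A^S$; by continuity the same holds for all $S\peq T$. Thus every coordinate ring $B_S$ is a polynomial algebra over $R$, and $B_S\otimes_{\Z}B_S$ is a polynomial algebra over $R\otimes_{\Z}R$. Now $R$ has no $T$-torsion by hypothesis, and neither does $R\otimes_{\Z}R$: for every prime $p$ dividing a member of $T$ the localisation $R_{(p)}$ is a torsion-free, hence flat, $\Z_{(p)}$-module, so $(R\otimes_{\Z}R)_{(p)}=R_{(p)}\otimes_{\Z_{(p)}}R_{(p)}$ is flat over $\Z_{(p)}$ and in particular $p$-torsion-free. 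Since $M_{\{1\}}(C)=C$ for every $R$-algebra $C$, the abelian groups $M_{\{1\}}(R),\;M_{\{1\}}(B_S),\;M_{\{1\}}(B_S\otimes_{\Z}B_S)$ are $R,\;B_S,\;B_S\otimes_{\Z}B_S$, all $T$-torsion-free, which is exactly the no-Hopf-$T$-torsion hypothesis.

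It then remains to assemble everything. Theorem~\ref{t3}(b) yields a unique isomorphism $\alpha_0:\ubM\silo\ubW^{M_{\{1\}}}$ in $\Rh\Fh\Vh_{T/R}$ with $(\alpha_0)_{\{1\}}=\id$; functoriality of $M\mapsto\ubW^M$ turns $\psi$ into an isomorphism $\Psi:\ubW^{M_{\{1\}}}\silo\ubW^{\G^{(r)}_R}$ in $\Rh\Fh\Vh_{T/R}$ with $\Psi_{\{1\}}=\psi$. Put $\alpha:=\Psi\verk\alpha_0$; then $\alpha_{\{1\}}=\psi$ and $t\verk\alpha_{\{1\}}=t\verk\psi=z$, so $\alpha$ is an isomorphism $(\ubM,z)\to(\ubW^{\G^{(r)}_R},t)$ in $\Ah\Fh\Vh^+_{T/R}$. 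For the uniqueness statements, let $\beta:\ubM\to\ubW^{(q)}\otimes_{\Z[q],\,r'}R$ be any morphism in $\Ah\Fh\Vh^+_{T/R}$, for some homomorphism $q\mapsto r'$; by definition $\beta_{\{1\}}:M_{\{1\}}\silo\G^{(r')}_R$ is then an isomorphism of ring-schemes with $z=t\verk\beta_{\{1\}}$, so composing with $\psi^{-1}$ gives $(\G^{(r)}_R,t)\cong(\G^{(r')}_R,t)$, hence $r=r'$ by the remark preceding Proposition~\ref{t5}; moreover any automorphism of $\G^{(r)}_R$ fixing $t$ is the identity, so $\beta_{\{1\}}=\psi$. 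Consequently $\Psi^{-1}\verk\beta$ and $\alpha_0$ are two morphisms $\ubM\to\ubW^{M_{\{1\}}}$ in $\Rh\Fh_{T/R}$ with $\{1\}$-component $\id$, so they coincide by the uniqueness part of Theorem~\ref{t3}(a), whence $\beta=\Psi\verk\alpha_0=\alpha$. The unital case is word for word the same, with $\Z[q,q^{-1}]$ in place of $\Z[q]$, the unital ring-scheme $\G^{(q)}$, and the unital versions of Proposition~\ref{t5} and Theorem~\ref{t3}. I expect the only genuinely fiddly step to be checking the no-Hopf-$T$-torsion hypothesis — pinning down $M_S\cong\A^S$ and that $R\otimes_{\Z}R$ inherits $T$-torsion-freeness from $R$ — since everything else is a direct invocation of Proposition~\ref{t5}, Theorem~\ref{t3}, and the rigidity of $(\G^{(r)}_R,t)$.
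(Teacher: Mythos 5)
Your proposal is correct and follows essentially the same route as the paper: establish $M_S\cong\A^S$ via the split exact sequences and continuity, deduce that $\ubM$ has no Hopf $T$-torsion, and then invoke Theorem~\ref{t3} together with Proposition~\ref{t5}. The paper's own proof is just a terse version of yours; the only place you add substance is the explicit check that $B_S\otimes_{\Z}B_S$ is $T$-torsion-free (which the paper asserts without comment), and your flatness/localisation argument for that step is sound.
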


\begin{proof}
Since the sequence \eqref{eq:7a} is scheme-theoretically split and since $M = M_{ \{ 1 \} }$ is isomorphic to $\A^1$, it follows inductively that $M_S \cong \A^S$ as schemes if $S$ is finite. Continuity of $\ubM$ implies that $M_S \cong \A^S$ for all $S \peq T$. Writing $M_S = \spec B_S$, the algebra $B_S$ is therefore a polynomial algebra over $R$. Since $R$ has no $T$-torsion by assumption and since $M \cong \A^1$ we see that $\ubM$ has no Hopf $T$-torsion. Now the corollary follows from Theorem \ref{t3} and Proposition \ref{t5}.
\end{proof}

\begin{rem}
For fixed $T$ it follows that $ (\ubW^{(q)} , t)$ over $\spec \Z [q]$ (resp. $\spec \Z [q , q^{-1}]$) is the universal deformation over reduced bases of $(\ubW , t)$ in $\Ah \Fh \Vh^+_T$. 
\end{rem}

We say that a ring homomorphism $\varphi : \Z [q] \to R$ is determined up to a unit in $R$ if $\varphi (q)$ is determined up to a unit in $R$. Forgetting the coordinate of $M$ we get

\begin{cor}
\label{t7}
Let $R$ be a reduced ring without $T$-torsion and $\ubM$ an object of the category $\Ah\Fh\Vh_{T/R}$. \\
a) In the non-unital case there is a homomorphism $\varphi : \Z [q] \to R$ which is determined up to a unit in $R$ such that there is an isomorphism in $\Rh\Fh\Vh_{T/R}$
\[
\alpha : \ubM \to \ubW^{(q)} \otimes_{\Z [q] , \varphi} R \; .
\]
If $\varphi (q)$ is not a zero-divisor in $R$ then $\alpha$ is uniquely determined. In the general case the set of $\alpha$'s is parametrized by the stabilizer of $\varphi (q)$ under the action of $R^{\times}$ on $R$.\\
b) In the unital case there is a unique isomorphism in $\Ah\Fh\Vh_{T/R}$
\[
\alpha : \ubM \longrightarrow \ubW \otimes_{\Z} R \; .
\]
Thus $\ubW$ has no non-trivial deformations over reduced bases in $\Ah\Fh \Vh_T$. 
\end{cor}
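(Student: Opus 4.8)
The plan is to derive Corollary \ref{t7} from Corollary \ref{t7neu} by analysing what happens when one forgets the coordinate in zero of $M := M_{\{1\}}$. For the existence parts, I would pick any coordinate $z$ in zero on $M$ --- possible since $M \cong \A^1$ --- and apply Corollary \ref{t7neu} to the rigidified object $(\ubM,z)$ of $\Ah\Fh\Vh^+_{T/R}$. In the non-unital case this produces a ring homomorphism $\varphi = \varphi_z : \Z[q] \to R$ together with an isomorphism $\alpha : \ubM \silo \ubW^{(q)} \otimes_{\Z[q],\varphi} R$ in $\Ah\Fh\Vh^+_{T/R}$; forgetting the coordinate, $\alpha$ is in particular an isomorphism in $\Rh\Fh\Vh_{T/R}$, which settles existence in a), and likewise with $\Z[q]$ replaced by $\Z[q,q^{-1}]$ in the unital case.

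That $\varphi$ is determined exactly up to multiplication by a unit, and the description of all $\alpha$, I would extract from three facts. First, the base-change identity $\ubW^{(q)} \otimes_{\Z[q],\psi} R = \ubW^{\G^{(\psi(q))}_R}$ for any $\psi : \Z[q] \to R$, which follows from $\ubW^{(q)} = \ubW^{\G^{(q)}}$ and \eqref{eq:6}. Second, the functoriality of $M \mapsto \ubW^M$ in the ring-scheme $M$, the maps $\pi, F_p, V_p$ being defined Yoneda-theoretically. Third, the facts recalled before Proposition \ref{t5}: the bijection \eqref{eq:7} identifies $\Iso(\G^{(r)}_R,\G^{(r')}_R)$ with $\{u \in R^\times \mid r = r'u\}$, so $\G^{(r)}_R \cong \G^{(r')}_R$ iff $r$ and $r'$ lie in the same $R^\times$-orbit, while $\Aut(\G^{(r)}_R)$ is the stabilizer $\{u \in R^\times \mid ru = r\}$ of $r$, which is trivial when $r$ is not a zero-divisor. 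Concretely: if $\varphi'$ also admits an isomorphism $\alpha' : \ubM \silo \ubW^{(q)} \otimes_{\Z[q],\varphi'} R$, restricting $\alpha' \verk \alpha^{-1}$ to the first component gives an isomorphism $\G^{(\varphi(q))}_R \silo \G^{(\varphi'(q))}_R$, forcing $\varphi'(q) \in R^\times \varphi(q)$; conversely any such $\varphi'$ works, since an isomorphism $\G^{(\varphi'(q))}_R \cong \G^{(\varphi(q))}_R$ induces, through $\ubW^{(-)}$, an isomorphism $\ubW^{(q)} \otimes_{\Z[q],\varphi'} R \cong \ubW^{(q)} \otimes_{\Z[q],\varphi} R \cong \ubM$.

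With $\varphi$ fixed and $r := \varphi(q)$, two isomorphisms $\alpha, \alpha'$ onto $\ubW^{(q)} \otimes_{\Z[q],\varphi} R = \ubW^{\G^{(r)}_R}$ differ by an automorphism $\beta = \alpha' \verk \alpha^{-1}$ of $\ubW^{\G^{(r)}_R}$ in $\Rh\Fh\Vh_{T/R}$. Applying the uniqueness part of Theorem \ref{t3} a) to $\ubW^{\G^{(r)}_R}$ (which has no Hopf $T$-torsion, $R$ having none) shows that $\beta$ is determined by its first component $\beta_{\{1\}} \in \Aut(\G^{(r)}_R)$; conversely every automorphism of $\G^{(r)}_R$ extends through $\ubW^{(-)}$ to one of $\ubW^{\G^{(r)}_R}$. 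Hence the $\alpha$'s form a nonempty torsor under $\Aut(\G^{(r)}_R)$, i.e. under the stabilizer of $\varphi(q)$, which is trivial when $\varphi(q)$ is not a zero-divisor --- this is part a). For part b), run the same argument in the unital category: Corollary \ref{t7neu} gives $\varphi : \Z[q,q^{-1}] \to R$ with $r := \varphi(q) \in R^\times$, and since $r$ is a unit $\G^{(r)}_R \cong \A^1_R = \G^{(1)}_R$ as unital ring-schemes (explicitly $x \mapsto rx$), so $\ubW^{(q)} \otimes_{\Z[q,q^{-1}],\varphi} R = \ubW^{\G^{(r)}_R} \cong \ubW^{\A^1_R} = \ubW \otimes_\Z R$; composing with $\alpha$ gives the isomorphism, unique because $\Aut(\G^{(1)}_R) = \{\id\}$ ($1$ not being a zero-divisor). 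The final sentence of b) just reformulates this.

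I expect no real difficulty beyond bookkeeping: one must compose the two base changes correctly so that $\ubW^{(q)} \otimes_{\Z[q],\varphi} R$ is genuinely $\ubW^{\G^{(\varphi(q))}_R}$, and invoke Theorem \ref{t3} a) in exactly the right form to pass between automorphisms of $\ubW^{\G^{(r)}_R}$ and automorphisms of $\G^{(r)}_R$, so that the stabilizer parametrisation of the $\alpha$'s comes out as stated.
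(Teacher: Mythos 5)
Your argument is correct and follows the route the paper indicates but does not spell out: the paper's entire ``proof'' of Corollary~\ref{t7} is the phrase ``Forgetting the coordinate of~$M$ we get\dots'' preceding the statement, so all the work of translating the rigidified Corollary~\ref{t7neu} into the coordinate-free statement is left to the reader, and your write-up supplies exactly the intended bookkeeping. The key points you identify --- the base-change identity $\ubW^{(q)}\otimes_{\Z[q],\psi}R = \ubW^{\G^{(\psi(q))}_R}$, the description of $\Iso(\G^{(r)}_R,\G^{(r')}_R)$ and $\Aut(\G^{(r)}_R)$ from \eqref{eq:7}, and the use of the uniqueness clause in Theorem~\ref{t3}\,a) to show that automorphisms of $\ubW^{\G^{(r)}_R}$ in $\Rh\Fh\Vh_{T/R}$ are in bijection with $\Aut(\G^{(r)}_R)$ via the torsor argument $\beta = \ubW^{(u)}$ --- are precisely the ingredients the authors have set up in the preceding pages, and your application of them is sound, including the unital case where $\varphi(q)\in R^\times$ forces $\G^{(\varphi(q))}_R\cong\G^{(1)}_R$ and $\Aut(\G^{(1)}_R)=\{\id\}$.
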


\begin{exmp} \label{t-ex1} \rm
Recall the second family $\oW^{g (q)}_{\!\!S}$ of $q$-deformed Witt vector schemes over $\spec \Z [q]$ from section \ref{sec:2neu}. Together with their Frobenius and Verschiebung structures and the first coordinate they make
\[
\overline{\ubW}^{g(q)} := (\oW^{g(q)}_{\!\!S})_{S \peq T}
\]
an object of $\Ah\Fh\Vh^+_{T / \Z [q]}$. We have $\oW^{g (q)}_{\{ 1 \} } = \G^{(1 - g (q))}$ by construction and this identification respects the preferred coordinates at zero of both sides. 
\end{exmp} 

It follows from Corollary \ref{t7neu} that there is a unique isomorphism 
\[
\overline{\ubW}^{g(q)} \silo \ubW^{(1-g (q))} \quad \text{in} \quad \Ah \Fh \Vh^+_{T / \Z [q]} \; .
\]
Moreover this is the only isomorphism of $\overline{\ubW}^{g (q)}$ with $\ubW^{(r)}$ for some $r \in \Z [q]$ in this category. Forgetting the coordinate at zero and working in the category $\Ah \Fh \Vh_{T / \Z [q]}$, Corollary \ref{t7} implies that there are exactly two values of $r \in \Z [q]$ for which an isomorphism (automatically unique) $\overline{\ubW}^{g (q)} \silo \ubW^{(r)}$ exists, namely $r = 1 - g (q)$ and $r = g (q) -1$. Note here that $\Z [q]$ is an integral domain with $\Z [q]^{\times} = \{ \pm 1 \}$. In particular it follows that $\oW^{g (q)}_{\!\!S} \cong \oW^{2 - g (q)}_{\!\!S}$ as observed in \cite{O2} Proposition 3.9.

\begin{exmp} \label{t-ex2} \rm
In \cite{L} Lenart introduced modified Witt rings $W^q (A)$ for every integer $q \in \Z$. In \cite{O1} truncated versions $W^q_S (A)$ of these rings were studied. They are obtained as follows. Define $W^q_S (A) = A^S$ as sets and consider the ghost map
\[
\Phi^q_S : W^q_S (A) \longrightarrow A^S \; ,
\]
given by the formula
\[
\Phi^q_S ((a_d)_{d \in S}) = \Big( \sum_{d \mid n} d q^{\frac{n}{d}-1} a^{\frac{n}{d}}_d \Big)_{n \in S} \; .
\]
It is the same ghost map as for $W^{(q)}_S (A)$ in formula \eqref{eq:1n} but on the ghost side $A^S$ is taken componentwise with the usual ring-structure (instead of $(A^{(q)})^S$ as in \eqref{eq:1n}). Using Fermat's little theorem it is shown in \cite{L}, \cite{O1} that for any $q \in \Z$ there is a unique, possibly non-unital ring-structure on $W^q_S (A)$ which is functorial in $A$ such that $\Phi^q_S$ becomes a ring homomorphism. Moreover the usual Frobenius and Verschiebung operators for $n \in S$ on the ghost side
\[
F_n ((a_{\nu})_{\nu \in S}) = (a_{n \nu})_{\nu \in S/n} \quad \text{and} \quad V_n ((a_{\nu})_{\nu \in S/n}) = (n \delta_{n \mid \nu} a_{\nu / n})_{\nu \in S}\; ,
\]
induce corresponding morphisms on the Lenart--Witt side. It is immediate that all conditions for $\ubW^q := (W^q_S)_{S \peq T}$ to be an object of $\Ah \Fh \Vh^+_{T / \Z}$ are satisfied except possibly for the property that $F_p$ should reduce $\mod p$ to the $p$-th power map for all primes $p \in S \peq T$. It can be shown with some effort that this condition is satisfied if and only if no prime divisor of $q$ is contained in $T$. Since $W^q_{\{ 1 \} } (A) = A$ as rings for any $q \in \Z$, it follows from Corollary \ref{t7neu} that $\ubW^q$ is uniquely isomorphic to $\ubW$ in $\Ah \Fh \Vh^+_{T / \Z}$ if no prime divisor of $q$ is in $T$. Thus in this case there are natural isomorphisms of rings
\[
W^q_S (A) \silo W_S (A) \; .
\]
This fact is a special case of \cite{O1} Theorem 14. If $T$ contains a prime divisor of $q$ then our theory does not apply to $\ubW^q$. Let us illustrate the preceding discussion with the case $T = \{ 1 ,p \}$ where the calculations are easy. 

The ghost maps are $\Phi^q_{ \{ 1 \} } = \id$ and
\[
\Phi^q_{ \{ 1, p \} } (a_1, a_p) = (a_1 , p a_p + q^{p-1} a^p_1) \; .
\]
Hence $W^q_{ \{ 1 \} } (A) = A$ as rings and on $W^q_{ \{ 1 , p \} } (A)$ addition and multiplication are given as follows
\[
(a_1 , a_p) + (b_1 , b_p) = \Big( a_1 + b_1 , a_p + b_p - q^{p-1} \sum^{p-1}_{\nu = 1} \frac{1}{p} {p \choose \nu} a^{\nu}_1 b^{p- \nu}_1 \Big)
\]
and
\[(a_1 , a_p)\cdot (b_1 , b_p) = (a_1 b_1 , pa_p b_p + q^{p-1} (a_p b^p_1 + a^p_1 b_p) + \frac{1}{p} q^{p-1} (q^{p-1} - 1) a^p_1 b^p_1) \; .
\]
The Frobenius morphism
\[
F_p : W^q_{ \{ 1,p \} } (A) \longrightarrow W^q_{ \{ 1 \} } (A) = A
\]
is given by the formula
\[
F_p (a_1 , a_p) = pa_p + q^{p-1} a^p_1 \; .
\]
With the projection
\[
\pi : W^q_{ \{ 1 ,p \} } (A) \longrightarrow W^q_{ \{ 1 \} } (A) = A \; , \; a = (a_1 , a_p) \longmapsto a_1
\]
we therefore have
\[
F_p (a) \equiv q^{p-1} \pi (a)^p \mod p A \; .
\]
Thus the relation
\[
F_p (a) \equiv \pi (a)^p \mod p A
\]
for all rings $A$ is equivalent to $q^{p-1} \equiv 1 \mod p$ i.e. to the assertion that $p \in T = \{ 1 , p \}$ is not a prime divisor of $q$. In this case the map
\[
\alpha : W^q_{ \{ 1 ,p \} } (A) \silo W_{ \{1,p \} } (A)
\]
with
\[
\alpha (a_1 , a_p) = \Big( a_1 , a_p + \frac{q^{p-1}-1}{p} a^p_1 \Big)
\]
is an isomorphism of rings. 
\end{exmp}

\begin{small}
\section*{Appendix: Witt vectors of inductive systems of rings}

We sketch a natural generalization of the theory of Witt vectors to $\ind$-rings.

Let $S \subset \N$ be a divisor stable subset and $\ubA = (A_n)_{n \in S}$ an inductive system of unital or non-unital commutative rings. This means that for $n \in S$ and $d \mid n$ there are ring homomorphisms
\[
\pi_{d,n} : A_d \to A_n
\]
with $\pi_{n,n} = \id$ and $\pi_{d_1,n} = \pi_{d,n} \verk \pi_{d_1,d}$ if $d_1 \mid d$.

Consider the set
\[
W_S (\ubA) = \prod_{n \in S} A_n
\]
and the ghost map
\[
\Gh_S : W_S (\ubA) \longrightarrow \prod_{n \in S} A_n
\]
defined by
\[
\Gh_{S,n} ((a_{\nu})_{\nu \in S}) = \sum_{d\mid n} d \pi_{d,n} (a_d)^{n/d} \quad \text{in} \; A_n \; .
\]

\begin{prop} \label{t9}
Assume that $A_n$ has no $n$-torsion for every $n \in S$. Then \\
$\Gh_S : W_S (\ubA) \to \prod_{n \in S} A_n$ is injective.
\end{prop}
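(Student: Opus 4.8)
The plan is to show injectivity of $\Gh_S$ by induction on the elements of $S$, ordered so that proper divisors come first. The key observation is that the ghost component $\Gh_{S,n}$ isolates $a_n$ in the form $n\, a_n + (\text{terms involving only } a_d \text{ for } d \propdiv n)$. Concretely, suppose $(a_\nu)_{\nu\in S}$ and $(a'_\nu)_{\nu\in S}$ have the same image under $\Gh_S$. First I would argue that $a_\nu = a'_\nu$ for all $\nu \propdiv n$ implies $a_n = a'_n$: from $\Gh_{S,n}((a_\nu)) = \Gh_{S,n}((a'_\nu))$ and the assumed equality on proper divisors, all summands indexed by $d \propdiv n$ (which only involve $\pi_{d,n}(a_d) = \pi_{d,n}(a'_d)$) cancel, leaving $n\, a_n = n\, a'_n$ in $A_n$. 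Since $A_n$ has no $n$-torsion, we get $a_n = a'_n$. Starting the induction at $n=1$, where $\Gh_{S,1}((a_\nu)) = a_1$ directly, this propagates to all of $S$.

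For an infinite $S$ one should be slightly careful: the induction is really along the poset $(S, \mid)$, which is well-founded (each $n$ has only finitely many divisors), so well-founded induction applies and no continuity hypothesis is needed here. The inductive step is exactly the cancellation argument above, and the only input it uses is the torsion-freeness of each individual $A_n$, not anything about the transition maps $\pi_{d,n}$ beyond functoriality.

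I expect there to be essentially no obstacle: this is the standard Cartier-style argument that the Witt polynomials/ghost map are ``triangular'' in the coordinates. The only mild subtlety is bookkeeping the transition maps $\pi_{d,n}$ correctly — in the single-ring case one writes $\sum_{d\mid n} d\, a_d^{n/d}$, and here each term must be pushed forward to $A_n$ via $\pi_{d,n}$ before comparing — but this does not affect the cancellation. No appeal to the ring structure on $W_S(\ubA)$ is required; injectivity is a purely set-theoretic statement about the map $\Gh_S$.
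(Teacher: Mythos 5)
Your argument is correct and is essentially identical to the paper's proof: both proceed by induction along the divisibility poset of $S$, cancel the terms indexed by proper divisors using the inductive hypothesis, and then invoke the no-$n$-torsion assumption on $n(a_n - a'_n) = 0$ in $A_n$. Your remark about well-founded induction on $(S,\mid)$ for infinite $S$ is a harmless clarification of what the paper does implicitly.
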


\begin{proof}
Assume that
\[
\Gh_S ((a_{\nu})) = \Gh_S ((b_{\nu})) \; .
\]
By definition we get $a_1 = b_1$. For $n \in S , n \neq 1$ assume that $a_d = b_d$ in $A_d$ has been shown for all $d \propdiv n$. The equation $\Gh_{S,n} ((a_{\nu})) = \Gh_{S,n} ((b_{\nu}))$ gives
\[
n (a_n - b_n) = \sum_{d \propdiv n} d (\pi_{d,n} (b_d)^{n/d} - \pi_{d,n} (a_d)^{n/d}) = 0
\]
and hence $a_n = b_n$ since $A_n$ has no $n$-torsion.
\end{proof}

\begin{example}
If $(x_n) = \Gh_S ((a_{\nu}))$ and $p \in S$, then
\[
x_p = \pi_{1,p} (a_1)^p + p a_p \quad \text{and hence} \; x_p \equiv \pi_{1,p} (a_1)^p \mod p A_p \; .
\]
The Witt polynomials $\Sigma_n$ and $\Pi_n$ for addition and multiplication depend only on the variables $x_d$ with $d \mid n$. Hence we can define addition and multiplication on $W_S (\ubA)$ by setting
\[
\bfa \oplus \bfb = \bfc \quad \text{for} \; \bfa , \bfb \in W_S (\ubA)
\]
where 
\[
c_n = \Sigma_n (\pi_{d,n} (a_d) , \pi_{d,n} (b_d) ; d \mid n)
\]
and similarly for multiplication. As in the usual case this is the only ring structure on the set $W_S (\ubA)$ which is functorial in $\ubA$ and for which $\Gh_S$ is a ring homomorphism if $\prod_{n \in S} A_n$ is equipped with componentwise addition and multiplication. Similarly the polynomials defining the usual Frobenius morphisms $F_n : W_S \to W_{S/n}$ define commuting ring homomorphisms\\
$F_n : W_S (\ubA) \to W_{S/n} (F_n (\ubA))$ where $F_n (\ubA) = (A_{n\nu})_{\nu \in S/n}$. There are also additive Verschiebung maps 
\[
V_n : W_{S/n} (V_n (\ubA)) \to W_S (\ubA)
\]
where $V_n (\ubA) = (A_{\nu})_{\nu \in S/n}$ and $V_n ((a_{\nu})_{\nu \in S/n}) = (\delta_{n \mid \mu} \pi_{\mu / n , \mu} (a_{\mu / n}))_{\mu \in S}$ with $\delta_{n\mid \mu} = 1$ if $n \mid \mu$ and $\delta_{n \mid \mu} = 0$ if $n \nmid \mu$. The projection 
\[
\res : W_S (\ubA) = \prod_{n \in S} A_n \overset{\pr_1}{\twoheadrightarrow} A_1
\]
is a surjective homomorphism of rings. 
\end{example}

\begin{example}
A ring $A$ can be viewed as the inductive system $A_{\const}$ where $A_n = A$ and $\pi_{d,n} = \id$ for $d \mid n , n \in S$. Then $W_S (A_{\const}) = W_S (A)$ together with Frobenius and Verschiebung maps. We can also form the trivial inductive system $A_{\triv}$ of non-unital rings with $A_n = A$ and $\pi_{n, n} = \id , \pi_{d,n} = 0$ for $d \propdiv n$. Let $A^{(n)} = A$ as an additive group but with ring structure $a \ast b = n ab$. Then $W_S (A_{\const}) = \prod_{\nu \in S} A^{(\nu)}$, the product ring. The Frobenius map $F_n$ corresponds to the map
\[
F_n : \prod_{\nu \in S} A^{(\nu)} = W_S (A_{\const}) \to W_S (F_n A_{\const}) = \prod_{\nu \in S / n} A^{(\nu)}
\]
with
\[
F_n ((a_{\nu})_{\nu \in S}) = (n a_{\nu n})_{\nu \in S / n}\; .
\]
The ring $W_{\N} (A_{\const})$ appeared previously as $W^0 (A)$ in \cite{L} Corollary 5.9 (1). 

For a divisor stable subset $T \subset S$ and an inductive system $\ubA = (A_n)_{n \in S}$ there is a natural surjective ring homomorphism
\[
W_S (\ubA) \xrightarrow{\proj} W_T (\res^T_S (\ubA)) \; , \; (a_n)_{n \in S} \mapsto (a_n)_{n \in T}
\]
where $\res^T_S (\ubA) = (A_{\nu})_{\nu \in T}$.
\end{example}

\begin{prop} \label{t12}
Consider an inductive system $\ubA = (A_n)_{n \in S}$. Then the following diagram is commutative:
\[
\xymatrix{
W_S (\ubA) \ar[r]^-{F_p} \ar[d] & W_{S/p} (F_p (\ubA)) \ar[r] & W_{S / p} (F_p (\ubA)) / p \\
W_S (\ubA)/p \ar[r]^{(\,)^p} & W_S (\ubA) / p \ar[r] & W_{S / p} (\res^{S/p}_S (\ubA)) / p \ar[u]_{W_{S/p} (\pi)}
}
\]
Here $\pi : \res^{S/p}_S (\ubA) \to F_p (\ubA)$ is the map with $\nu$-th component $\pi_{\nu , p\nu} : A_{\nu} \to A_{p \nu}$ for $\nu \in S / p$.
\end{prop}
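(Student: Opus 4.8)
The plan is to reduce to a universal inductive system, on which the ghost map is injective, and then to convert the surviving congruence into one about Witt polynomials that is handled by the elementary $p$-power congruence already used in the proof of Proposition~\ref{t1}; the diagram is the inductive-system analogue of~\eqref{eq:1a}, and the argument runs along the same lines.

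First I would note that both composites in the diagram, say $\Phi_{\ubA},\Psi_{\ubA}\colon W_S(\ubA)\to W_{S/p}(F_p(\ubA))/p$, are the components at $\ubA$ of natural transformations of functors on the category of inductive systems of rings on $S$: naturality is clear for reduction mod $p$, for the $p$-th power map on a ring, and for the restriction $W_S\to W_{S/p}$; it is built into the construction of $F_p$; and for $W_{S/p}(\pi)$ it is precisely the statement that a morphism of inductive systems commutes with the transition maps $\pi_{\nu,p\nu}$. Since the inductive system $\ubB:=(\Z[t_d\mid d\mid n])_{n\in S}$, with the inclusions as transition maps, represents the functor $\ubA\mapsto W_S(\ubA)=\prod_{n\in S}A_n$, with universal element $\mathbf t=(t_n)_{n\in S}$, the Yoneda lemma reduces us to checking $\Phi_{\ubB}(\mathbf t)=\Psi_{\ubB}(\mathbf t)$. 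All component rings of $\ubB$ are polynomial rings over $\Z$, hence torsion free, so by Proposition~\ref{t9} the ghost maps of $W_S(\ubB)$ and of $W_{S/p}(F_p(\ubB))$ are injective; moreover $F_p(\ubB)=(\Z[t_d\mid d\mid pn])_{n\in S/p}$ is again a polynomial inductive system, with inclusions as transition maps, and admits commuting Frobenius lifts $\sigma_q\colon t_d\mapsto t_d^q$ for every prime $q$. Writing $v=W_{S/p}(\pi)(\proj(\mathbf t))$ and $u=F_p(\mathbf t)-v^p$ in $W_{S/p}(F_p(\ubB))$, the identity $\Phi_{\ubB}(\mathbf t)=\Psi_{\ubB}(\mathbf t)$ becomes $u\in p\,W_{S/p}(F_p(\ubB))$, and a direct computation of ghost components gives $\Gh_{S/p}(u)_m=w_{pm}-w_m^p$, where $w_n=\sum_{d\mid n}d\,t_d^{n/d}$ is the $n$-th Witt polynomial. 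By injectivity of the ghost map it therefore suffices to show that $z:=\bigl((w_{pm}-w_m^p)/p\bigr)_{m\in S/p}$ is a ghost vector for $F_p(\ubB)$: one must check that each $z_m$ lies in $\Z[t_d\mid d\mid pm]$ and that the $z_m$ satisfy the Dwork-type congruences characterising the image of the ghost map (the straightforward inductive generalisation, to torsion-free inductive systems with commuting Frobenius lifts, of the classical Dwork lemma). Granting this, $z=\Gh_{S/p}(u_1)$ for a unique Witt vector $u_1$, and $\Gh_{S/p}(p\,u_1)=p\,z=\Gh_{S/p}(u)$ forces $u=p\,u_1$.

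The remaining congruence bookkeeping is the technical core. That each $z_m$ is an integral polynomial in the $t_d$ with $d\mid pm$ follows from $\sigma_p(w_m)\equiv w_m^p\bmod p$ together with the standard Witt-polynomial congruence $w_{pm}\equiv\sigma_p(w_m)\bmod p$. For a prime $q\neq p$ the congruences needed for $z$ come for free: $\Gh_{S/p}(u)=\Gh_{S/p}(F_p(\mathbf t))-\Gh_{S/p}(v^p)$ is a difference of ghost vectors, hence itself a ghost vector and hence satisfies the $q$-congruences, which descend to $z=\Gh_{S/p}(u)/p$ because $p$ is a unit modulo powers of $q$. The only real point is $q=p$, where one must improve by one power of $p$ on what ``difference of ghost vectors'' supplies, i.e.\ prove that $w_{p^2m}-w_{pm}^p\equiv\sigma_p(w_{pm})-\sigma_p(w_m)^p\bmod p^{v_p(pm)+1}$; writing the left side as $\bigl(w_{p^2m}-\sigma_p(w_{pm})\bigr)-\bigl(w_{pm}^p-\sigma_p(w_m)^p\bigr)$, the first bracket vanishes modulo $p^{v_p(p^2m)}=p^{v_p(pm)+1}$ by the Witt-polynomial congruence, and the second follows from $w_{pm}\equiv\sigma_p(w_m)\bmod p^{v_p(pm)}$ via the implication ``$\alpha\equiv\beta\bmod p^k$ implies $\alpha^p\equiv\beta^p\bmod p^{k+1}$ for $k\ge1$'' that appears in the proof of Proposition~\ref{t1}. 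I expect this step — keeping precise track of the available power of $p$, and, more fundamentally, the fact that reduction mod $p$ on Witt rings is not componentwise, which is exactly what forces the detour through the ghost map — to be the only genuine obstacle; the rest is formal. One can also avoid the inductive Dwork lemma altogether: embed $W_{S/p}(F_p(\ubB))$ as a subring of $W_{S/p}(\Z[t_d\mid d\in S])$, apply the classical statement~\eqref{eq:1a} there, and verify that the resulting Witt vector has components in the correct polynomial subrings.
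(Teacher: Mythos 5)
Your proof is correct but takes a noticeably longer route than the paper's, which is essentially a two-line substitution. The paper writes $f_\nu$ and $G_\nu$ for the integral polynomials defining $F_p$ and the Witt $p$-th power on ordinary Witt vectors, cites the classical fact (equivalent to diagram~\eqref{eq:1a}) that $f_\nu-G_\nu\in p\,\Z[x_d\mid d\mid p\nu]$, and observes that both composites of the diagram, evaluated at $(a_\mu)$, have $\nu$-th component obtained from $f_\nu$ resp.\ $G_\nu$ by the single ring-homomorphic substitution $x_d\mapsto\pi_{d,p\nu}(a_d)$; divisibility by $p$ is therefore preserved with no further work. You instead reduce via Yoneda to a universal torsion-free inductive system, pass to the injective ghost map there, and verify Dwork-type congruences by hand, in effect re-deriving rather than citing the divisibility of $f_\nu-G_\nu$. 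This does work: your congruence bookkeeping at $q=p$ is right, and invoking the inductive Dwork lemma is not circular since its proof does not use Proposition~\ref{t12} — though note that in the paper the Dwork lemma appears only \emph{after} this proposition, so your argument would require a reordering of the appendix. One small caution on the Yoneda step: in the non-unital category a ring homomorphism out of $\Z[t_d\mid d\mid n]$ also records an idempotent image of $1$, so the representing object should rather be the system of augmentation ideals $(t_d:d\mid n)\subset\Z[t_d\mid d\mid n]$, or one should reduce to the unital case by unitalization. Finally, your parenthetical alternative at the end — apply the classical~\eqref{eq:1a} over the universal polynomial ring and check the components land in the correct subrings — is essentially the paper's proof; the explicit embedding into $W_{S/p}(\Z[t_d\mid d\in S])$ is unnecessary once one notes that $f_\nu-G_\nu$ already lies in $\Z[x_d\mid d\mid p\nu]$.
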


\begin{proof}
Set $\Lambda = \Z [x_d \mid d \in S]$ and let $(f_{\nu})_{\nu \in S/p}$ with $f_{\nu} \in \Z [x_d \mid d | p\nu]\subset \Lambda$ be the family of polynomials defining the morphism $F_p : W_S \to W_{S/p}$. Also let $(G_{\mu})_{\mu \in S}$ with $G_{\mu} \in \Z [x_d \mid d|\mu] \subset \Lambda$ be the family of polynomials defining the $p$-th power morphism $(\,)^p : W_S \to W_S$. It is known that for $\nu \in S / p$ the difference $f_{\nu} - G_{\nu}$ is divisible by $p$ in $\Lambda$. This is equivalent to the fact that diagram \eqref{eq:1a} commutes. Now for $(a_{\mu})_{\mu \in S} \in W_S (\ubA)$ the $\nu$-th component for $\nu \in S/p$ of 
\[
F_p ((a_{\mu})_{\mu \in S}) - W_{S/p} (\pi) (\pi_{S , S/p} ((a_{\mu})^p_{\mu \in S}))
\]
is given by
\[
f_{\nu} (\pi_{d, p\nu} (a_d) ; d \mid p\nu) - G_{\nu} (\pi_{d , p\nu} (a_d)) ; d \mid p \nu) \; .
\]
This holds because the transition maps are ring homomorphisms and $\pi_{\nu , p\nu} \verk \pi_{d, \nu} = \pi_{d , p\nu}$. The assertion follows.
\end{proof}

There is a Dwork lemma for our Witt vector rings of inductive systems.

{\bf Dwork lemma} Let $\ubA = (A_n)_{n \in S}$ be an inductive system of rings on a divisor stable subset $S$ of $\N$. Assume that for all $n \in S$ and primes $p \mid n$ we are given ring homomorphisms (compatible with the transition maps)
\[
\phi_p : A_{n/p} \to A_n
\]
such that the following diagram commutes:
\[
\xymatrix{
A_{n/p} \ar[r]^{\phi_p} \ar[d] & A_n \ar[r] & A_n / p \ar@{=}[d] \\
A_{n/p}/ p \ar[r]^{( \, )^p} & A_{n/p} / p \ar[r]^{\pi_{n/p,n}} & A_n / p\; .
}
\]
Thus, for $a \in A_{n/p}$ we have in $A_n$
\[
\phi_p (a) \equiv \pi_{n/p,n} (a^p) \mod p A_n \; .
\]
Then the image of the ghost map
\[
\Gh_S : W_S (\ubA) \to \prod_{n \in S} A_n
\]
is the following subring:
\[
\Gh_S (W_S (\ubA)) = \{ (x_n)_{n \in S} \mid \phi_p (x_{n/p}) \equiv x_n \mod p^{v_p (n)} A_n \; \text{for} \; p \mid n , n \in S \} \; .
\]

\begin{proof}
The argument in the proof of Lemma 1.1 in \cite{H} can be easily adapted to our setting. Let us look at the case $S = \{ 1, p \}$ for example. If $(x_n) = \Gh_S ((a_{\nu}))$, then $x_1 = a_1$ and $x_p = \pi_{1,p} (x^p_1) + pa_p$. Hence $(x_1, x_p) \in \Gh_S (W_S (\ubA))$ if and only if $x_p \equiv \pi_{1,p} (x^p_1) \mod p A_p$ or equivalently, if and only if $x_p \equiv \phi_p (x_1) \mod p A_p$. 
\end{proof}

We need the following construction. Given an inductive system of rings $\ubA = (A_n)_{n \in S}$, we obtain another such system $\ubW (\ubA)$ by setting
\[
\ubW (\ubA)_n = W_{S/n} (F_n (\ubA)) \quad \text{for} \; n \in S
\]
and defining $\pi_{d,n}$ to be the composition
\[
\pi_{d,n} : W_{S/d} (F_d (\ubA)) \xrightarrow{\proj} W_{S/n} (\res^{S/n}_{S/d} (F_d (\ubA))) \xrightarrow{W_{S/n} (\pi)} W_{S/n} (F_n (\ubA)) \; .
\]
Here
\[
\pi : \res^{S/n}_{S /d} (F_d (\ubA)) = (A_{\nu d})_{\nu \in S/n} \to (A_{\nu n})_{\nu \in S/n} = F_n (\ubA)
\]
is the map with $\nu$-th component $\pi_{\nu d , \nu n} : A_{\nu d} \to A_{\nu n}$. It follows from Proposition \ref{t12} that the maps $F_p$ equip $\ubW (\ubA)$ with a commuting family of Frobenius lifts as in the Dwork lemma. The commutation property of the $F_p$ follows from the known commutation properties of the universal polynomials defining the Witt vector Frobenius morphisms. The surjective ring homomorphisms
\[
\ubW (\ubA)_n = W_{S/n} (F_n (\ubA)) \xrightarrow{\res} F_n (\ubA)_1 = A_n
\]
are compatible with the transition maps $\pi_{d,n}$ for $d \mid n , n \in S$. Hence we obtain a map of inductive systems of rings
\[
\res : \ubW (\ubA) \to \ubA \; .
\]
The ghost maps
\[
\Gh_{S/n} : W_{S/n} (F_n (\ubA)) \to \prod_{k \in S/n} A_{kn}
\]
combine into a morphism of inductive systems of rings
\[
\ubGh : \ubW (\ubA) \to (\prod_{k \in S/n} A_{kn})_{n \in S}
\]
such that $\res \verk \ubGh = \res$.

{\bf Universal property of $\ubW$} In the situation of the Dwork lemma, assume in addition that $A_n$ has no $n$-torsion for each $n \in S$. Moreover we suppose that $\phi_p$ commutes with $\phi_l$ for all primes $p , l \in S$. This means that for every $n \in S$ with $p \mid n$ and $l \mid n$ the following diagram commutes:
\[
\xymatrix{
A_{n/p l} \ar[r]^{\phi_p} \ar[d]_{\phi_l} & A_{n/l} \ar[d]^{\phi_l} \\
A_{n/p} \ar[r]^{\phi_p} & A_n \; .
}
\]
Then there is a unique morphism $\lambda : \ubA \to \ubW (\ubA)$ of inductive systems of rings with the following properties:\\
a) $\res \verk \lambda = \id_{\tA}$\\
b) $\lambda$ commutes with the Frobenius lifts on $\ubA$ and $\ubW (\ubA)$, i.e. the diagrams
\[
\xymatrix{
A_{n/p} \ar[r]^-{\lambda_{n/p}} \ar[d]_{\phi_p} & \ubW (\ubA)_{n/p} \ar[d]^{F_p} \\
A_n \ar[r]^-{\lambda_n} & \ubW (\ubA)_n
}
\]
commute for all $p \mid n , n \in S$.

\begin{proof}
We first assume that $\lambda$ satisfying the desired properties exists and determine its form. According to the Dwork lemma and Proposition \ref{t9}, via the injective ghost map we have isomorphisms for all $n \in S$ 
\[
\begin{array}{l}
\ubW (\ubA)_n = W_{S/n} (F_n(\ubA)) \cong \\[0.2cm]
\{ (x_k) \in \prod_{k \in S/n} A_{nk} \mid \phi_l (x_{k/l}) \equiv x_k \mod l^{v_l (k)} A_{nk} \; \text{for}\; l \mid k , k \in S/n \}\; ,
\end{array} 
\]
where $l$ denotes prime numbers. In the following proof, we will view this isomorphism as an identification.

For $m = p^{\nu_1}_1 \cdots p^{\nu_r}_r$ dividing $n \in S$ we set
\[
\phi_m := \phi^{\nu_1}_{p_1} \verk \ldots \verk \phi^{\nu_r}_{p_r} : A_{n/m} \to A_n \; .
\]
By assumption, this is independent of the ordering of the primes $p_1 , \ldots , p_r$ dividing $m$. For the $\ubW$-Frobenius morphisms the corresponding formula
\[
F_m = F^{\nu_1}_{p_1} \verk \ldots \verk F^{\nu_r}_{p_r} : \ubW (\ubA)_{n/m} \to \ubW (\ubA)_n
\]
holds and we therefore have a commutative diagram for $m \mid n , n \in S$
\[
\xymatrix{
A_{n/m} \ar[r]^-{\lambda_{n/m}} \ar[d]_{\phi_m} & \ubW (\ubA)_{n/m} \ar[d]^{F_m} \\
A_n \ar[r]^-{\lambda_n} & \ubW (\ubA)_n \; .
}
\]
In the representation of $\ubW (\ubA)_{n/m}$ and $\ubW (\ubA)_n$ on the ghost side via the Dwork lemma, the map $F_m$ is given by $F_m ((x_k)_{k \in S / (n/m)}) = (x_{km})_{k \in S/n}$. For $a \in A_{n/m}$ consider the relation
\[
\lambda_n (\phi_m (a)) = F_m (\lambda_{n/m} (a)) \; .
\]
The property $\res \verk \lambda = \id$ implies that $\lambda_n (b)_1 = b$ for all $b \in A_n$. Hence
\[
\phi_m (a) = \lambda_n (\phi_m (a))_1 = \lambda_{n/m} (a)_m \; .
\]
Setting $\nu = n/m$, it follows that for all $\nu \in S , m \in S/ \nu$ and $a \in A_{\nu}$, we have $\lambda_{\nu} (a)_m = \phi_m (a)$, i.e. $\lambda_{\nu} (a) = (\phi_m (a))_{m \in S/\nu}$. Thus we have seen that a map $\lambda : \ubA \to \ubW (\ubA)$ with properties a) and b) must have the form $\lambda = (\lambda_n)_{n \in S}$ where $\lambda_n : A_n \to \ubW (\ubA)_n = W_{S /n } (F_n \ubA)$ is given by $\lambda_n (a) = (\phi_k (a))_{k \in S/n}$. On the other hand, setting $x_k = \phi_k (a)$ for $a \in A_n$ we have
\[
\phi_l (x_{k/l}) = \phi_l \phi_{k/l} (a) = \phi_k (a) = x_k \; .
\]
Hence $\lambda_n (a) = (\phi_k (a))_{k \in S/n}$ is indeed an element of $\ubW (\ubA)_n$ in the Dwork lemma description above. Clearly $\lambda_n$ so defined is a ring homomorphism with 
\[
\lambda_n (a)_1 = \phi_1 (a) = a \; \text{i.e.} \; \res \verk \lambda = \id\; .
\]
The maps $\lambda_n$ are compatible with the transition maps of $\ubA$ and $\ubW (\ubA)$ because the $\phi_p$ are compatible with transition maps by definition. Finally, we have
\[
\lambda_n (\phi_p (a)) = (\phi_k (\phi_p (a)))_{k \in S/n} = (\phi_{kp} (a))_{k \in S/n} = F_p ((\phi_k (a)))_{k \in S/(n/p)}
\]
i.e. $\lambda_n \verk \phi_p = F_p \verk \lambda_{n/p}$. 
\end{proof}

\begin{cor}[Universal property of $\ubW$ over $S$]
Fix a divisor stable set $S$ and consider inductive systems of rings $\ubA = (A_n)_{n \in S}$ and $\ubB = (B_n)_{n \in S}$ with the following properties:\\
i) $A_n$ and $B_n$ have no $n$-torsion for all $n \in S$.\\
ii) $\ubA$ is equipped with commuting Frobenius lifts.\\
Then for any morphism $\alpha : \ubA \to \ubB$ of $\ind$-rings there is a unique morphism $\beta : \ubA \to \ubW (\ubB)$ of $\ind$-sets commuting with the Frobenius maps such that the diagram
\[
\xymatrix{
 & \ubW (\ubB) \ar[dr]^{\res} & \\
\ubA \ar[ur]^{\beta} \ar[rr]^{\alpha} & & \ubB
}
\]
commutes. The morphism $\beta$ is a morphism of $\ind$-rings. 
\end{cor}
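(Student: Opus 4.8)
The plan is to obtain $\beta$ by composing the morphism produced by the universal property of $\ubW$, applied to $\ubA$ itself, with the functoriality of the construction $\ubB \mapsto \ubW(\ubB)$. Hypotheses i) and ii) are exactly what the universal property of $\ubW$ requires for $\ubA$, so it gives a unique morphism $\lambda : \ubA \to \ubW(\ubA)$ of $\ind$-rings commuting with the Frobenius lifts and satisfying $\res \verk \lambda = \id_{\ubA}$; moreover $\lambda$ was computed there to be, in ghost coordinates, $\lambda_n(a) = (\phi_k(a))_{k \in S/n}$, where $\phi_k$ is the iterated Frobenius lift on $\ubA$. Next I would record that $\ubW(-)$ is functorial on $\ind$-rings: a morphism $\alpha : \ubA \to \ubB$ induces $\ubW(\alpha)_n := W_{S/n}(F_n(\alpha)) : \ubW(\ubA)_n \to \ubW(\ubB)_n$, and since the universal polynomials for addition, multiplication and the Witt--Frobenius morphisms have integer coefficients and the transition maps $\proj$ and $W_{S/n}(\pi)$ defining $\ubW$ are natural in the inductive system, $\ubW(\alpha)$ is a morphism of $\ind$-rings commuting with the maps $F_p$; and because $\res$ is the first Witt (equivalently, first ghost) component, $\res \verk \ubW(\alpha) = \alpha \verk \res$.

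I would then set $\beta := \ubW(\alpha) \verk \lambda$. It is a morphism of $\ind$-rings as a composite of such; it commutes with the Frobenius maps because $\lambda$ and $\ubW(\alpha)$ do; and $\res \verk \beta = \res \verk \ubW(\alpha) \verk \lambda = \alpha \verk \res \verk \lambda = \alpha$, so the triangle commutes. By naturality of the ghost maps, $\beta$ is given on ghost coordinates by $\Gh_{S/n}(\beta_n(a)) = (\alpha_{nk}(\phi_k(a)))_{k \in S/n}$.

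For uniqueness, let $\beta' : \ubA \to \ubW(\ubB)$ be any morphism of $\ind$-sets commuting with the Frobenius maps and with $\res \verk \beta' = \alpha$. Iterating the compatibility $F_p \verk \beta'_{n/p} = \beta'_n \verk \phi_p$ gives $F_m \verk \beta'_\nu = \beta'_{\nu m} \verk \phi_m$ for all $\nu \in S$ and $m \in S/\nu$. Since the Witt--Frobenius $F_m$ acts on ghost coordinates by $(x_k)_k \mapsto (x_{km})_k$ and $\res$ picks out the first ghost coordinate, applying $\res$ to this identity shows that the $m$-th ghost coordinate of $\beta'_\nu(a)$ is $\alpha_{\nu m}(\phi_m(a))$ for every $a \in A_\nu$. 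The ghost map $\Gh_{S/\nu} : W_{S/\nu}(F_\nu\ubB) \to \prod_{k \in S/\nu} B_{\nu k}$ is injective by Proposition \ref{t9}, since $B_{\nu k}$ has no $(\nu k)$-torsion, hence no $k$-torsion, for all $k \in S/\nu$. Thus $\beta'_\nu(a)$ is uniquely determined and equals $\beta_\nu(a)$, so $\beta' = \beta$; and $\beta$ is a morphism of $\ind$-rings, which gives the last assertion.

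I expect the only genuine work to be the bookkeeping for the functoriality of $\ubW$ together with its compatibility with $\res$ and the $F_p$; conceptually this is nothing more than the naturality of the Witt polynomials, so I would keep that step terse. One could also avoid invoking functoriality by defining $\beta$ outright via the ghost formula above, the only point to verify then being that the tuple $(\alpha_{nk}(\phi_k(a)))_k$ lies in the image of $\Gh_{S/n}$ --- which holds because it equals $\Gh_{S/n}(\ubW(\alpha)_n(\lambda_n(a)))$.
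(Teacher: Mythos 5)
Your proof is correct and follows essentially the same route as the paper: existence via $\beta = \ubW(\alpha)\verk\lambda$ using the Cartier--Dieudonn\'e lemma and the commuting square $\res\verk\ubW(\alpha)=\alpha\verk\res$, and uniqueness by reading off the $m$-th ghost coordinate of $\beta'_\nu(a)$ from the iterated Frobenius compatibility and invoking injectivity of the ghost map (Proposition \ref{t9}). The only cosmetic difference is that the paper first reduces to the composite $\gamma=\Gh\verk\beta'$ before doing the ghost-coordinate computation, whereas you apply $\res\verk F_m$ directly; the computation is identical.
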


\begin{proof}
The existence follows from the Catier--Dieudonn\'e lemma by setting $\beta = \ubW (\alpha) \verk \lambda$ and looking at the commutative diagram
\[
\xymatrix{
A \ar[r]^-{\lambda} \ar@{=}[dr] & \ubW (\ubA) \ar[r]^{\ubW (\alpha)} \ar[d]^{\res} & \ubW (\ubB) \ar[d]^{\res} \\
 & \ubA \ar[r]^{\alpha} & \ubB \; .
}
\]
For the uniqueness, assume that we are given another lift (of $\ind$-sets) $\beta' : \ubA \to \ubW (\ubB)$ of $\alpha$ commuting  with the Frobenius maps. Since the $B_n$ have no $n$-torsion the ghost map $\ubW (\ubB) \to (\prod_{k \in S/n} B_{kn})_{n \in S}$ has injective components.

Hence it suffices to show that a map $\gamma$ making the following diagram commutative is uniquely determined if it commutes with the Frobenius maps
\[
\xymatrix{
 & \Big( \prod_{k \in S/n} B_{kn} \Big)_{n \in S} \ar[d]^{\pi} \\
\ubA \ar[ur]^{\gamma} \ar[r]^{\alpha} & \ubB \; .
}
\]
Here $\pi = (\pi_n)_{n \in S}$ where $\pi_n : \prod_{k \in S/n} B_{kn} \to B_n$ maps $(x_k)_{k \in S/n}$ to $x_1$. Moreover, commutation with Frobenius maps means that all diagrams
\[
\xymatrix{
A_{n/p} \ar[r]^-{\gamma_{n/p}} \ar[d]_{\phi_p} & \prod_{k\in S / (n/p)} B_{kn/p} \ar[d]^{F_p} \\
A_n \ar[r]^-{\gamma_n} & \prod_{k \in S/n} B_{kn}
}
\]
for all prime numbers $p \mid n , n \in S$ are commutative. Here
\[
F_p ((x_k)_{k \in S / (n/p)}) = (x_{pk})_{k \in S/n} \; .
\]
It follows that defining $\phi_m = \phi^{\nu_1}_{p_1} \verk \ldots \verk \phi^{\nu_r}_{p_r}$ for $m = p^{\nu_1}_1 \cdots p^{\nu_r}_r$ as before, the following diagrams for $m \mid n , n \in S$ commute:
\[
\xymatrix{
A_{n/m} \ar[r]^-{\gamma_{n/m}} \ar[d]_{\phi_m} & \prod_{k \in S / (n/m)} B_{kn/m} \ar[d]^{F_m} \\
A_n \ar[r]^-{\gamma_n} & \prod_{k \in S/n} B_{kn} \; .
}
\]
Here $F_m ((x_k)_{k \in S/(n/m)}) = (x_{mk})_{k \in S/n}$. The property $\pi_n \verk \gamma_n = \alpha_n$ implies that for $a \in A_{n/m}$ we have
\[
\alpha_n (\phi_m (a)) = \pi_n (\gamma_n (\phi_m (a))) = \pi_n (F_m (\gamma_{n/m} (a))) = \gamma_{n/m} (a)_m \; .
\]
Setting $\nu = n/m$ it follows that for all $\nu \in S , m \in S/\nu$ and $a \in A_{\nu}$ we have
\[
\gamma_{\nu} (a)_m = \alpha_{m\nu} (\phi_m (a)) \quad \text{in} \; B_{m\nu}
\]
or by replacing the index $m$ by the letter $k$:
\[
\gamma_{\nu} (a) = (\alpha_{k\nu} (\phi_k (a)))_{k \in S/\nu} \quad \text{in} \; \prod_{k \in S/\nu} B_{k\nu} \; .
\]
\end{proof}

\begin{rem}
Consider the map $\beta = (\beta_n) : \ubA \to \ubW (\ubB)$ where
\[
\beta_n : A_n \to \ubW (\ubB)_n = W_{S/n} (F_n (\ubB)) \; .
\]
In the proof above we have seen that the composition of $\beta_n$ with the ghost map
\[
\Gh : W_{S/n} (F_n (\ubB)) \to \prod_{k \in S/n} B_{kn}
\]
is given as follows: For $a \in A_n$ we have the formula:
\[
\Gh (\beta_n (a)) = (\alpha_{kn} (\phi_k (a)))_{k \in S/n} \; .
\]
\end{rem}
\end{small}
%\bibliographystyle{abstract}
%\bibliography{refs}

\end{document}